\documentclass{amsart}

\usepackage[utf8]{inputenc}

\usepackage{amssymb}
\usepackage[hidelinks]{hyperref}
\usepackage[textsize=small]{todonotes}
\usepackage{tikz}
\usetikzlibrary{cd}
\usetikzlibrary{decorations.text}
\usepackage{float}
\usepackage{xcolor}
\usepackage[normalem]{ulem}
\usepackage{mathrsfs}
\usepackage{graphicx} 

\theoremstyle{plain}
\newtheorem{theorem}{Theorem}[section]
\newtheorem*{theorem-wo-number}{Theorem}
\newtheorem{lemma}[theorem]{Lemma}

\newtheorem{proposition}[theorem]{Proposition}

\theoremstyle{definition}
\newtheorem{definition}[theorem]{Definition}

\newtheorem{example}[theorem]{Example}

\newtheorem{question}[theorem]{Question}

\theoremstyle{remark}

\newtheorem*{claim}{Claim}

\newcommand{\bQ}{\mathbb{Q}}
\newcommand{\Q}{\bQ}
\newcommand{\bN}{\mathbb{N}}
\newcommand{\N}{\bN}

\newcommand{\cA}{\mathcal{A}}
\newcommand{\cB}{\mathcal{B}}

\newcommand{\cF}{\mathcal{F}}

\newcommand{\cG}{\mathcal{G}}

\newcommand{\cI}{\mathcal{I}}
\newcommand{\I}{\cI}
\newcommand{\cJ}{\mathcal{J}}
\newcommand{\J}{\cJ}

\newcommand{\cP}{\mathcal{P}}

\newcommand{\continuum}{\mathfrak{c}}

\newcommand{\bnumber}{\mathfrak{b}}

\newcommand{\dnumber}{\mathfrak{d}}

\DeclareMathOperator{\add}{add}
\DeclareMathOperator{\non}{non}
\DeclareMathOperator{\cov}{cov}
\DeclareMathOperator{\cof}{cof}
\DeclareMathOperator{\adds}{\add^*}
\DeclareMathOperator{\addo}{\add^*_{\omega}}
\DeclareMathOperator{\nons}{\non^*}
\DeclareMathOperator{\nono}{\non^*_{\omega}}
\DeclareMathOperator{\covs}{\cov^*}
\DeclareMathOperator{\covp}{\cov^*_+}
\DeclareMathOperator{\cofs}{\cof^*}
\DeclareMathOperator{\cofo}{\cof^*_{\omega}}
\newcommand{\FIN}{\mathsf{Fin}} 
\newcommand{\fin}{\FIN}
\newcommand{\Fin}{\FIN}
\newcommand{\finxfin}{\FIN\otimes\FIN}

\newcommand{\conv}{\mathsf{conv}} 
\newcommand{\BI}{\mathsf{BI}} 

\DeclareMathOperator{\closure}{cl}

\renewcommand{\subset}{\subseteq}
\renewcommand{\supset}{\supseteq}

\begin{document}


\title[Infima and cardinal characteristics of critical ideals]{Infima and cardinal characteristics of critical ideals for countable compact spaces}


\author{Ma\l{}gorzata Kowalczuk}
\address{Institute of Mathematics\\ Faculty of Mathematics, Physics and Informatics\\ University of Gda\'{n}sk\\ ul.~Wita Stwosza 57\\ 80-308 Gda\'{n}sk\\ Poland}
\email{m.kowalczuk.090@studms.ug.edu.pl}


\date{\today}


\subjclass[2020]{
Primary: 03E17, 03E05, 03E15}

%
%
%


\keywords{
ideal, 
cardinal characteristics of an ideal, 
conv ideal, 
Kat\'{e}tov order,
Borel complexity of an ideal, 
compact countable space.
}


\begin{abstract}
For each countable ordinal $\alpha \ge 2$, the ideals $\conv_\alpha$ were introduced in 
\emph{Critical ideals for countable compact spaces} (to appear in Fund. Math., see also  
arXiv:2503.12571)
to characterize compact countable spaces homeomorphic to $\omega^\alpha \cdot n+1$ with the order topology. We study the structure of these ideals in the Kat\v{e}tov order, namely for limit ordinals $\alpha$, we show that  $\conv_{\alpha}$ do not serve as greatest lower bounds of the $\conv_\beta$ for $\beta<\alpha$. We therefore define the ideals $\conv_{<\alpha}$ with this property and show that together, the ideals $\conv_\alpha$ and $\conv_{<\alpha}$ form intertwined decreasing hierarchies of $\Sigma^0_4$- and $\Pi^0_5$-complete ideals. Furthermore, we examine several cardinal invariants of $\conv_\alpha$, computing invariants that have recently appeared in the literature in various contexts.

\end{abstract}


\maketitle




\section{Introduction}\label{sec:intro}

See Preliminaries for notions and notations used in the introduction.

For each countable ordinal $\alpha\geq 2$, the ideal $\conv_\alpha$ was introduced in \cite{critical-ideals-RF-MK-AK} and used to characterize the class of all compact countable spaces homeomorphic to $\omega^\alpha \cdot n+1$ with the order topology. In that work, the characterization was formulated in terms of the existence of a convergent subsequence defined on a set not belonging to $\conv_\alpha$. 
These ideals extend $\finxfin$ and $\BI$ in the sense that $\conv_2$ is isomorphic to $\finxfin$ and $\conv_3$ is isomorphic to $\BI$ , and the resulting hierarchy proceeds downward in the Kat\v{e}tov order. The ideal $\BI$ was introduced in \cite{MR4584767} and is isomorphic to the ideal $\mathcal{WT}$ introduced in \cite{BRENDLE-CJM-2025}, where the authors showed its connection with weakly tight almost disjoint families.

In this paper, we investigate the ideals $\conv_\alpha$, examining both their structure and some of their properties.

In the first section we focus on the structure.
The ideals $\conv_\alpha$, for all countable ordinals $\alpha \geq 2$, form a decreasing hierarchy in the Kat\v{e}tov order. We show, however, that for a limit ordinal $\alpha$, the ideal $\conv_\alpha$ is not the greatest lower bound of the ideals $\conv_\beta$ for $\beta<\alpha$.
We therefore define an ideal $\conv_{<\alpha}$ which serves as the greatest lower bound of  $\{\conv_\beta : \beta<\alpha\}$ and show that it is strictly above $\conv_\alpha$ in the Kat\v{e}tov order. Moreover, we prove that the ideals $\conv_{<\alpha}$ are $\Pi^0_5$-complete. This result gains additional significance when contrasted with the fact that the ideals $\conv_\alpha$ are $\Sigma^0_4$-complete. Consequently, we obtain two intertwined chains of ideals of length $\omega_1$ with different Borel complexities.

In the second section of this paper, we focus on some cardinal invariants, which are fundamental tools for studying ideals on countable sets. They were originally introduced in \cite{MR1686797} in the context of studying ultrafilters on $\omega$. More recently, they have been independently rediscovered by other authors and applied in a variety of different contexts. In \cite{Alek-Miller-Nulls,Alek-Arturo-Miller-Trees}, these invariants were used to study ideals related to Laver and Miller forcings. In \cite{MR4905423}, to study connections between different types of convergences of real-valued functions. We also compute the invariant $\covp(\I)$ introduced in \cite{MR4472520}, in which the authors investigate its relationship to a form of destructibility of ideals in forcing extensions.
 We calculate these new invariants for the ideals that are the main focus of this paper, namely the ideals $\conv_\alpha$.


\section{Preliminaries}
\label{sec:prelim}

Recall that an ordinal number $\alpha$ is equal to the set of all ordinal numbers less than $\alpha$. 
In particular, the  smallest infinite ordinal number $\omega=\{0,1,\dots\}$ is equal to the set of all natural numbers $\N$, and each natural number $n = \{0,\dots,n-1\}$ is equal  to the set of all natural numbers less than $n$.
Using this identification, we can for instance write $n\in k$ instead of $n<k$ and $n<\omega$ instead of $n\in \omega$. 
Moreover, for ordinals $\alpha<\beta$, we write 
$[\alpha,\beta]$ to denote the set of all ordinals $\xi$ such that  $\alpha\leq \xi\leq \beta$, and similarly for $(\alpha,\beta)$ and other intervals.
 We write  $[A]^\omega$ to denote the family of all infinite countable subsets of $A$.
 By $A\subseteq^* B$ we mean that $A\setminus B$ is finite, and by $f\leq^* g$ for functions $f,g:\omega\to\omega$ we mean that there is some $k\in\omega $ such that $f(n)\leq g(n)$ for all $n\geq k$.

An \emph{ideal} on a nonempty set $X$ is a  family $\I\subseteq\cP(X)$ that satisfies the following properties:
\begin{enumerate}
\item $\emptyset\in \I$ and $X\not\in\I$,
\item if $A,B\in \I$ then $A\cup B\in\I$,
\item if $A\subseteq B$ and $B\in\I$ then $A\in\I$,
\item $\I$ contains all finite subsets of $X$. 
\end{enumerate}
We write $\I^+=\{A\subseteq X: A\notin\I\}$ and call it the \emph{coideal of $\I$}.
An ideal $\I$ is \emph{tall}  if for every infinite $A\subseteq X$ there is an infinite $B\in\I$ such that $B\subseteq A$.

 Let $\I$ and $\J$ be ideals on $X$ and $Y$ respectively.
We say that $\I$ and $\J$ are \emph{isomorphic} (in short $\I\approx\J$) 
if there exists a bijection $f:X\rightarrow Y$ such that $A\in\I\iff f[A]\in\J$ for every $A\subset X$.
We say that \emph{$\J$ is below $\I$ in  the Kat\v{e}tov order} (in short $\J\leq_{K}\I$) if there is a function $f:X\rightarrow Y$ such that $f^{-1}[A]\in\I$ for every $A\in \J$. 
We say that the ideals $\I$ and $\J$ are \emph{$\leq_K$-equivalent} if $\I\leq_K\J$ and $\J\leq_K\I$. 
We say that an ideal $\J$ on $Y$  is below an ideal $\I$ on $X$  in  the \emph{Kat\v{e}tov-Blass order} (in short $\J\leq_{KB}\I$) if there is a finite-to-one function $f:X\to Y$ such that $f^{-1}[A]\in\I$ for every $A\in \I$. 
The \emph{disjoint union} of $\I$ and $\J$ is an ideal on $(\{0\}\times X)\cup(\{1\}\times Y)$ given by
$\I \oplus \J =\{A\subset (X\times\{0\})\cup(Y\times \{1\}): \{x\in X: (x,0)\in A\}  \in \I \text{ and } \{y\in Y: (y,1)\in A\} \in \J\}.$

By identifying sets of natural numbers with their characteristic functions,
we equip $\cP(\omega)$ with the topology of the Cantor space $\{0,1\}^\omega$ and therefore
we can assign topological complexity to ideals on $\omega$.
In particular, an ideal $\I$ is Borel ($F_\sigma$, analytic, resp.) if $\I$ is Borel ($F_\sigma$, analytic, resp.) as a subset of the Cantor space.
We will also be interested in $\Sigma^0_\alpha$-complete sets and $\Pi^0_\alpha$-complete sets, for some $\alpha\in\omega_1$. These notions are thoroughly explained in \cite{MR1321597}.

\begin{example}\  
\begin{enumerate}

 \item 
     $\fin(X)$ is  the ideal of all finite subsets of $X$.
    We  write $\fin$ instead of $\fin(\omega)$.

\item $\{\emptyset\} \otimes \Fin $ is the ideal on $\omega \times\omega$ defined by $$A\in \{\emptyset\}\otimes \fin \iff \forall i\in\omega \, (|\{j: (i,j)\in A\}|<\omega). $$

\item 
$\Fin^2= \Fin\otimes\Fin$ is the ideal on $\omega\times \omega$ defined by
\begin{equation*}
A\in \Fin^2 \iff \exists i_0\in\omega\, \forall i\geq i_0\, (|\{j\in\omega: (i,j)\in A\}|<\omega).
\end{equation*}

\item $\BI$ is the ideal  on $ \omega\times \omega \times\omega$
introduced in \cite[Definition~4.1]{MR4584767}
and defined by 
\begin{equation*}
    \begin{split}
A\in \BI 
\iff 
\exists i_0\in\omega\, \left[\right.
&
\forall i< i_0\, (\, \{(j,k)\in\omega\times\omega : (i,j,k)\in A\}\in \Fin^2)
\  \land 
\\&
\left.
\hspace{-3pt}\forall i\geq i_0\, (|\{(j,k)\in\omega\times\omega : (i,j,k)\in A\}|<\omega)
\right].
    \end{split}
\end{equation*}

\end{enumerate}
 \end{example}

For a  topological space $X$, 
we write $c(X)$ to denote the set of all convergent sequences in $X$.
For a subset $D\subseteq X$ of a topological space $X$, we write  $\conv(D)$ to denote 
the ideal on  $D$ consisting of all subsets of $D$ which can be covered by ranges of finitely many sequences in $D$ which are convergent in $X$ i.e.
$A\in \conv(D)$
if and only if $A\subseteq D$ and there exist $k\in \omega$ and sequences $(d^{(i)}_n)_{n\in \omega}\in c(X)\cap D^\omega$ for $i<k$ such that 
$$A \subseteq \bigcup_{i<k}\left\{d^{(i)}_n:n\in \omega\right\}.$$

A point $p\in X$ is an \emph{accumulation} (a.k.a.~\emph{limit}) \emph{point} of a set $A\subseteq X$ in a topological space $X$ if $p\in \closure(A\setminus\{p\})$.
By $A^d$ we denote the \emph{derived set of $A$} i.e.~the set of  all accumulation  points of $A$.

    Let $X$ be a sequentially compact space.
    Let $D\subseteq X$ be a countable infinite subset of $X$.
  For every $A\subseteq D$, 
$$A\in \conv(D) \iff A^{d} \text{ is finite}.$$

For every infinite set $A\subseteq D$ there exists an infinite set $B\subseteq A$ such that  $B\in \conv(D)$. In particular, the ideal  $\conv(D)$ is tall.\label{lem:conv-ideal-via-derivative-set:item-4}

\begin{definition}
\ 
\begin{enumerate}
    \item 
If $X=[0,1]$ with the Euclidean topology, we define the ideal
$$\conv = \conv(\Q\cap [0,1]).$$
\item For a countable  ordinal $\alpha\geq 2$ and 
the space $X=\omega^{\alpha}+1$ with the order topology, we define the ideal
$$\conv_{\alpha} = \conv(\omega^{\alpha}+1).$$
In the rest of the paper, we write ``$\alpha$'' to mean ''a countable ordinal  $\alpha\geq 2$''.
\end{enumerate}
\end{definition}
The ideals $\conv_\alpha$ for all $\alpha$ 
are $\Sigma^0_4$-complete 
(see \cite[Corollary~7.4]{critical-ideals-RF-MK-AK})
and the ideal $\conv$ is $\Sigma^0_4$-complete (see \cite[Proposition~4.9]{MR4584767} and \cite[p.~21]{alcantara-phd-thesis}).

It is worth noting that whenever $X\subset \omega^\alpha+1$ and $Y\subset \omega^\alpha+1$ are order isomorphic for some $\alpha$, then the ideal $\conv(X)$ is isomorphic to the ideal $\conv(Y)$.
We will frequently use the above observation together with the fact that $\omega^\alpha$ is order isomorphic to every interval of the form $[\omega^\beta, \omega^\alpha)$ for some $\beta<\alpha$ (see e.g.~\cite[Exercise 5 in Chapter I]{MR597342}).

\begin{proposition}[\cite{critical-ideals-RF-MK-AK}]
\label{prop:gwiazdka}
Let  $A\subseteq \omega^{\alpha}+1$.
The following conditions are equivalent.
\begin{enumerate}
    \item $A\in \conv_\alpha$.
    \item For every increasing sequence $(\lambda_n)_{n<\omega}$ in $\omega^\alpha$, the set
    $A\cap [\lambda_n, \lambda_{n+1}]$
is finite for all but finitely many $n$.
\end{enumerate}\end{proposition}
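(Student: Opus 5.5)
We prove both implications through the characterization stated just before the definition of $\conv(D)$: since $\omega^\alpha+1$ is a compact metrizable countable space, hence sequentially compact, a set $A\subseteq\omega^\alpha+1$ lies in $\conv_\alpha$ if and only if $A^d$ is finite. So we show that $A^d$ is finite exactly when condition (2) holds.

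\emph{(1)$\Rightarrow$(2).} Suppose $A^d$ is finite, and suppose (2) fails for some increasing sequence $(\lambda_n)$ in $\omega^\alpha$. Then there are infinitely many $n$ with $A\cap[\lambda_n,\lambda_{n+1}]$ infinite. Each such interval $[\lambda_n,\lambda_{n+1}]$ is compact, so it contains an accumulation point $p_n$ of $A$. A point can lie in at most two of the intervals, because consecutive intervals share only an endpoint. Hence infinitely many of the $p_n$ are distinct. All of them belong to $A^d$, which contradicts the finiteness of $A^d$.

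\emph{(2)$\Rightarrow$(1).} Suppose $A^d$ is infinite. Choose distinct points $p_k\in A^d$, and pass to a subsequence so that they are strictly increasing; this is possible because any infinite set of ordinals contains a strictly increasing $\omega$-sequence. Since the sequence is strictly increasing, each $p_k<\omega^\alpha$ except possibly the last, so we may assume all $p_k<\omega^\alpha$.
\begin{itemize}
\item[] Each $p_k$ is a limit point of $A$. Taking $\mu_k=p_k+1$ for the right end, the interval $[p_{k-1}+1,\,p_k]$ (or $[0,p_0]$ when $k=0$) is a neighbourhood-type interval containing infinitely many points of $A$ below $p_k$: this holds because $p_k$ is an accumulation point and $p_{k-1}<p_k$, so the open interval $(p_{k-1},p_k]$ is a neighbourhood of $p_k$.
\item[] Now define $\lambda_0=0$ and $\lambda_{k+1}=p_k+1$. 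This is an increasing sequence in $\omega^\alpha$, since $\omega^\alpha$ is a limit ordinal. For every $k\ge 1$, the interval $[\lambda_k,\lambda_{k+1}]=[p_{k-1}+1,\,p_k+1]$ contains $(p_{k-1},p_k]$, and hence contains infinitely many points of $A$.
\end{itemize}
So $A\cap[\lambda_n,\lambda_{n+1}]$ is infinite for all $n\ge1$, and (2) fails.

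The only delicate point is the topological one used in the second bullet: in the order topology, an interval $(\beta,p]$ with $\beta<p$ is an open neighbourhood of $p$. We also need the possible accumulation point $\omega^\alpha$ to be excluded from the chosen $p_k$. Both are handled above by taking an increasing sequence of accumulation points, all but at most one of which are then below $\omega^\alpha$.
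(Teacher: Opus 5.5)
Your proof is correct. The paper does not prove this proposition; it quotes it from \cite{critical-ideals-RF-MK-AK}, so there is no in-text argument to compare against. Reducing both directions to the derived-set characterization ($A\in\conv(D)\iff A^d$ finite, recorded in the Preliminaries) is the natural route, and both directions go through:
\begin{itemize}
\item In (1)$\Rightarrow$(2), the observation that a point lies in at most two of the intervals correctly forces infinitely many distinct points of $A^d$.
\item In (2)$\Rightarrow$(1), the sets $(p_{k-1},p_k]$ are open neighbourhoods of the accumulation points $p_k$, so each contains infinitely many points of $A$.
\end{itemize}

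Two cosmetic remarks:
\begin{itemize}
\item A strictly increasing $\omega$-sequence has no last term. So $p_k<p_{k+1}\le\omega^\alpha$ already gives $p_k<\omega^\alpha$ for every $k$, and the ``except possibly the last'' hedge and the unused $\mu_k$ can be dropped.
\item The shift by $1$ is unnecessary. Taking $\lambda_k=p_k$ works, because the closed interval $[p_{k-1},p_k]$ already contains the neighbourhood $(p_{k-1},p_k]$ of $p_k$.
\end{itemize}
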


As a corollary, we obtain that the ideals $\conv_2$ and $\fin\otimes \fin$ ($\conv_3$ and $\BI$, resp.) are isomorphic.

The following theorem establishes the structure of the aforementioned ideals in the Kat\v{e}tov order and additionally records one of their properties.

\begin{proposition}[\cite{critical-ideals-RF-MK-AK}] \ 
\label{prop:hasldkfhas}
\begin{enumerate}
    \item If $\alpha<\beta$, then $\conv_\beta\leq_K\conv_\alpha$  and $\conv_\alpha\not\leq_K\conv_\beta$.     
    \item  $\conv \leq_K \conv_{\alpha}$ and  $\conv_\alpha\not\leq_K\conv$ for every  $\alpha$.
    \item $\conv_\alpha\leq_K \I \iff \conv_\alpha \leq_{KB} \I$ for every $\alpha $ and every ideal $\I$.
\end{enumerate}
 
\end{proposition}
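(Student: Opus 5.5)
The three parts of Proposition~\ref{prop:hasldkfhas} need different tools. The positive Kat\v{e}tov reductions are obtained by building explicit maps and checking them with Proposition~\ref{prop:gwiazdka}. The non-reductions are obtained by contradiction, using sequential compactness and derived sets. Part (3) should follow from a general finite-to-one modification argument.

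For (1), assume $\alpha<\beta$. I will find a function $f:\omega^\alpha+1\to\omega^\beta+1$ such that preimages of sets in $\conv_\beta$ lie in $\conv_\alpha$. I write $\omega^\beta=\omega^\alpha\cdot\gamma$ for some $\gamma\ge 1$; since $\omega^\beta$ is order isomorphic to a tail $[\omega^{\delta},\omega^\beta)$, I can also rearrange it. The cleanest choice should be a monotone embedding of $\omega^\alpha$ onto a closed copy inside $\omega^\beta$, for instance $\xi\mapsto\xi$ on $[0,\omega^\alpha)$, with $\omega^\alpha\mapsto\omega^\alpha$. Then take any $B\in\conv_\beta$ and any increasing $(\lambda_n)$ in $\omega^\alpha$. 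This $(\lambda_n)$ is also increasing in $\omega^\beta$, so $f^{-1}[B]\cap[\lambda_n,\lambda_{n+1}]=B\cap[\lambda_n,\lambda_{n+1}]$ is finite for almost all $n$. By Proposition~\ref{prop:gwiazdka}, $f^{-1}[B]\in\conv_\alpha$.

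For $\conv_\alpha\not\leq_K\conv_\beta$, I suppose $f:\omega^\beta+1\to\omega^\alpha+1$ witnesses the reduction. I want a set $A\notin\conv_\beta$, that is, one with infinite derived set, such that $f[A]$ has finite derived set. Every point of the range has Cantor--Bendixson rank at most $\alpha<\beta$. Using this rank gap, I will inductively thin out the copies of $\omega^{\alpha+1}$ inside $\omega^\beta$. On each such copy, sequential compactness of the range and the rank gap let me find a subset $A_k$ whose derived set in the domain is nonempty, at a point $\mu_k$, while $f[A_k]$ converges or is finite. Gluing infinitely many $A_k$ gives a set $A$ with infinite derived set. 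I then diagonalize further so that the images $f[A_k]$ converge to a single point, or to finitely many points, so that $f[A]\in\conv_\alpha$.

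The main obstacle is this diagonal choice: preventing the limits of the $f[A_k]$ from producing infinitely many accumulation points. I would first try to find a single $\eta$ such that infinitely many $f[A_k]$ can be chosen inside any neighbourhood of $\eta$. Such an $\eta$ should exist by compactness of $\omega^\alpha+1$ applied to the chosen limit points. I would then pass to a subsequence whose limits converge to $\eta$, which leaves only $\eta$ as an accumulation point.

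Part (2) is handled in the same way. For $\conv\leq_K\conv_\alpha$, I embed $\omega^\alpha+1$ homeomorphically into $[0,1]$, which is possible because it is a countable compact metrizable space. I send points to rationals, with a small adjustment for limit points: each is mapped to a nearby rational within a distance tending to $0$. A set with finitely many accumulation points in $[0,1]$ then pulls back to a set with finitely many accumulation points, and the criterion in Section~\ref{sec:prelim} gives the reduction. For $\conv_\alpha\not\leq_K\conv$, I use the same diagonal argument as in (1), exploiting that $\Q\cap[0,1]$ is dense-in-itself, which supplies an unbounded rank gap.

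For (3), given a Kat\v{e}tov witness $f$, I will modify it to be finite-to-one. The plan is to show that fibres that are not in $\conv_\alpha$-small positions can be spread out. Only finitely many fibres can be in $\I^+$; otherwise the preimage of a convergent sequence lands in $\I^+$. The remaining fibres are in $\I$, so each can be redistributed along a convergent sequence approaching its original value while preserving the reduction. The technical check is that this redistribution keeps preimages of $\conv_\alpha$ sets inside $\I$, which I verify using tallness of $\conv_\alpha$.
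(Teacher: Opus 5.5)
\textbf{There is a genuine gap in the non-reduction $\conv_\alpha\not\le_K\conv_\beta$ in (1).} Your positive reductions are fine: the inclusion $\omega^\alpha+1\subseteq\omega^\beta+1$ in (1), and in (2) an \emph{injective} map into $\Q\cap[0,1]$ that stays within distance $\varepsilon_x\to0$ of an embedding.

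The gluing step is false. Suppose each $f[A_k]$ is infinite and converges to $y_k$, with the $y_k$ distinct and $y_k\to\eta$. Then every $y_k$ is an accumulation point of $f[\bigcup_k A_k]$, so this image lies in $\conv_\alpha^+$. Compactness makes the limits converge; it never makes them coincide. The fallback ``or $f[A_k]$ is finite'' does not help. By (3) you may assume $f$ is finite-to-one, and then every infinite $A_k$ has infinite image. Note also that the steps you describe never use $\alpha<\beta$. Applied to the identity of $\omega^\alpha+1$, which is a reduction, they would refute $\conv_\alpha\le_K\conv_\alpha$. The ``rank gap'' is invoked but does no work.

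What is missing is a dichotomy combined with an induction on Cantor--Bendixson rank. For non-isolated $\mu\in\omega^\beta+1$, let $L(\mu)$ be the set of limits of $f(a_n)$ along sequences $a_n\to\mu$ with $a_n\neq\mu$.

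\emph{Case 1.} Some $\eta$ lies in $L(\mu_k)$ for infinitely many distinct $\mu_k$. Choose $A_k\to\mu_k$ with $f\restriction A_k\to\eta$ and $f[A_k]$ inside the $k$-th basic neighbourhood of $\eta$. Then $A=\bigcup_k A_k\in\conv_\beta^+$ and $(f[A])^d\subseteq\{\eta\}$. This is the correct form of your gluing.

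\emph{Case 2.} Every $\eta$ lies in $L(\nu)$ for only finitely many $\nu$. Show by induction that $L(\mu)$ meets the $\gamma$-th derived set of $\omega^\alpha+1$ whenever $\mu$ has rank $\gamma\ge1$.
\begin{enumerate}
\item Base case: this uses that $f$ is finite-to-one.
\item Successor case: if $\mu$ has rank $\gamma+1$, the values chosen in $L(\nu_j)$ at rank-$\gamma$ points $\nu_j\to\mu$ repeat each value only finitely often. So they have an injective convergent subsequence. Its limit has rank $\ge\gamma+1$ and, by a diagonal choice, lies in $L(\mu)$.
\item Limit case: this uses closedness of derived sets.
\end{enumerate}
At $\mu=\omega^{\alpha+1}$ this is impossible, because $\omega^\alpha+1$ has no point of rank $\alpha+1$. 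That is where the rank gap enters.

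\textbf{Two smaller points.}
\begin{enumerate}
\item $\conv_\alpha\not\le_K\conv$ needs no separate argument. Indeed, $\conv_\alpha\le_K\conv\le_K\conv_{\alpha+1}$ would contradict (1).
\item In (3), no fibre can lie in $\I^+$ at all, since singletons belong to $\conv_\alpha$. Also, fibres over isolated points cannot be spread along sequences approaching their value. Instead, map the preimage of each block of isolated points ($[0,\omega)$ or $(\lambda,\lambda+\omega)$) injectively into that block with $g(x)\ge f(x)$. Map fibres over limit points $y$ onto pairwise disjoint sequences converging to $y$, whose distance to $y$ shrinks along an enumeration. Then $(f[A])^d\subseteq(g[A])^d$ for every $A$, and tallness plays no role.
\end{enumerate}
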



\section{Infima of critical ideals for countable compact spaces}
In this section, for each limit ordinal $\alpha$, we define an ideal  $\conv_{<\alpha}$ and prove that it is strictly above the ideal $\conv_\alpha$ in the Kat\v{e}tov order and that it is the greatest lower bound of the family $\{\conv_\beta : \beta <\alpha\}$ with respect to the Kat\v{e}tov order. Then we investigate its Borel complexity.

\begin{definition}
For every limit ordinal $\alpha $ define the ideal, 
    $$\conv_{<\alpha} =\{A\subset \omega^{\alpha}+1 : \forall\beta<\alpha \ (A\cap \omega^\beta \in\conv_\beta) \}. $$
\end{definition}

Observe that the ideal $\conv_{<\alpha}$ is Kat\v{e}tov equivalent to the disjoint union of $\{\conv_\beta : \beta <\alpha\}$, which serves as a natural greatest lower bound in the Kat\v{e}tov order. The crucial point, however, is that $\conv_{<\alpha}$ is not Kat\v{e}tov equivalent to $\conv_\alpha$. This is easier to prove when working directly with $\conv_{<\alpha}$, since both ideals are defined on the same space.

\begin{proposition}
Let $\alpha$ be a  limit ordinal.
    \begin{enumerate}
        \item The ideal $\conv_{<\alpha}$ is the greatest lower bound of the family $\{\conv_\beta : \beta <\alpha\}$ in the Kat\v{e}tov order i.e.
            \begin{enumerate}
\item  $ \conv_{<\alpha}\leq_K \conv_\beta$ for every $\beta<\alpha$,

 \item 
 for all ideals $\I$, whenever  $\I\leq_K\conv_\beta$ for each $\beta<\alpha$, then $\I\leq_K \conv_{<\alpha}.$
           \end{enumerate}

        \item 
        $\conv_\alpha\leq_K\conv_{<\alpha}$
        and 
      $\conv_{<\alpha}\nleq_K \conv_\alpha$
    
    \end{enumerate}
\end{proposition}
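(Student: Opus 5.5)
We handle the four claims separately; everything except the second half of (2) is essentially bookkeeping. Throughout we fix a strictly increasing sequence $\beta_0<\beta_1<\dots$ cofinal in $\alpha$ and put $\lambda_n=\omega^{\beta_n}$ and $I_n=[\lambda_n,\lambda_{n+1})$. Each $I_n$ is order isomorphic to $\omega^{\beta_{n+1}}$, and its closure in $\omega^\alpha+1$ is homeomorphic to $\omega^{\beta_{n+1}}+1$. We will use freely the characterization of $\conv(D)$ by finiteness of derived sets, together with the fact that homeomorphisms of closed intervals preserve it.

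For (1a), fix $\beta<\alpha$ and let $f\colon\omega^\beta+1\to\omega^\alpha+1$ be the inclusion on $\omega^\beta$, sending the point $\omega^\beta$ anywhere, say to $0$. If $A\in\conv_{<\alpha}$, then $f^{-1}[A]\subseteq (A\cap\omega^\beta)\cup\{\omega^\beta\}$, and $A\cap\omega^\beta\in\conv_\beta$ by definition, so $f^{-1}[A]\in\conv_\beta$.

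For (1b), let $g_n\colon\omega^{\beta_{n+1}}+1\to Y$ witness $\I\le_K\conv_{\beta_{n+1}}$. Define $f$ on $I_n$ as $g_n\circ h_n$, where $h_n\colon I_n\to\omega^{\beta_{n+1}}$ is the order isomorphism, and send $\omega^\alpha$ anywhere. Take $A\in\I$ and $\beta<\alpha$, and choose $m$ with $\beta\le\beta_m$. Then $f^{-1}[A]\cap\omega^\beta$ is covered by the finite set $[0,\lambda_0)$ together with the finitely many pieces $h_n^{-1}[g_n^{-1}[A]]\cap\omega^\beta$ for $n<m$. Each such piece has finite derived set, because $g_n^{-1}[A]\in\conv_{\beta_{n+1}}$ and $h_n$ extends to a homeomorphism of closures. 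Hence $f^{-1}[A]\cap\omega^\beta\in\conv_\beta$.

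For the first half of (2) we show $\conv_\alpha\subseteq\conv_{<\alpha}$, so that the identity map witnesses $\conv_\alpha\le_K\conv_{<\alpha}$. If $A^d$ is finite in $\omega^\alpha+1$, then $(A\cap\omega^\beta)^d$ computed in the closed subspace $\omega^\beta+1$ is contained in $A^d$, hence is finite.

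The second half of (2) is the main obstacle. Suppose $f\colon\omega^\alpha+1\to\omega^\alpha+1$ satisfies $f^{-1}[A]\in\conv_\alpha$ for every $A\in\conv_{<\alpha}$. We aim to build $C\notin\conv_\alpha$ with $f[C]\in\conv_{<\alpha}$. Since $C\subseteq f^{-1}[f[C]]$, this is a contradiction. The plan is to take $C=\bigcup_k C_k$, where:
\begin{itemize}
\item each $C_k$ is a convergent sequence in $[\mu_k,\mu_{k+1}]$, for an increasing sequence $(\mu_k)$ in $\omega^\alpha$;
\item $f[C_k]$ is a convergent sequence or a finite set inside a single $I_{n_k}$;
\item the indices $n_k$ are strictly increasing.
\end{itemize}
Then $f[C]\cap I_n$ is a finite union of convergent sequences for each $n$, so $f[C]\in\conv_{<\alpha}$. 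On the other hand $C\notin\conv_\alpha$ by Proposition~\ref{prop:gwiazdka}, since $C$ meets infinitely many intervals $[\mu_k,\mu_{k+1}]$ in an infinite set.

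To find $C_k$ we use a dichotomy, given $\mu=\mu_k$ and $N>n_{k-1}$. Let $Q_N=f^{-1}[\omega^{\beta_N}]$ and $R=(\mu,\omega^\alpha)\setminus(Q_N\cup f^{-1}[\{\omega^\alpha\}])$. The fibre $f^{-1}[\{\omega^\alpha\}]$ lies in $\conv_\alpha$, because singletons lie in $\conv_{<\alpha}$.
\begin{itemize}
\item If $R\in\conv_\alpha$, then the tail $(\mu,\omega^\alpha)$ lies in $Q_N$ modulo $\conv_\alpha$. Modifying $f$ to be constant off $Q_N$ then yields $\conv_{\beta_N}\le_K\conv((\mu,\omega^\alpha])\approx\conv_\alpha$, which contradicts Proposition~\ref{prop:hasldkfhas}(1).
\item Hence $R\notin\conv_\alpha$. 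If every set $R\cap f^{-1}[I_n]$ were finite, then $f[R]$ would meet each $I_n$ finitely, so $f[R]\in\conv_{<\alpha}$ while $R\subseteq f^{-1}[f[R]]$ is positive, again a contradiction. So some $n\ge N$ has $R\cap f^{-1}[I_n]$ infinite. By sequential compactness we thin it to a convergent sequence $C_k$ whose image is convergent or finite.
\end{itemize}

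The delicate point is keeping the limit of $C_k$ strictly below $\omega^\alpha$, so that it sits in a bounded interval $[\mu_k,\mu_{k+1}]$. If all candidate pieces accumulate only at $\omega^\alpha$, the union could collapse into $\conv_\alpha$. I would first try to strengthen the second case of the dichotomy to produce a piece $R\cap f^{-1}[I_n]$ with an accumulation point below $\omega^\alpha$. If every such piece has derived set $\subseteq\{\omega^\alpha\}$, the idea is to choose finite subsets of these pieces diagonally so that their images still meet each $I_n$ finitely, while their union has infinitely many accumulation points below $\omega^\alpha$.
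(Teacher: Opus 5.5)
\textbf{Gap in the second half of (2).} Parts (1a), (1b) and the inclusion $\conv_\alpha\subseteq\conv_{<\alpha}$ are fine. In (1b), take $\beta_0=0$ and $\beta_1\ge 2$, so that $[0,\lambda_0)=\{0\}$ really is finite and each $\conv_{\beta_{n+1}}$ is defined. The first bullet of your dichotomy is also correct: it shows $R\notin\conv_\alpha$ via $\conv_{\beta_N}\nleq_K\conv_\alpha$.

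The gap is in $\conv_{<\alpha}\nleq_K\conv_\alpha$, where your last paragraph is a plan rather than an argument. You extract $C_k$ from a single piece $R\cap f^{-1}[I_n]$, about which you only know that it is infinite. Nothing excludes that every such piece accumulates only at $\omega^\alpha$. For instance, $R$ may accumulate at many $\nu<\omega^\alpha$ while each piece contains only one point near each such $\nu$. Then every $C_k$ converges to $\omega^\alpha$, no bound $\mu_{k+1}<\omega^\alpha$ exists, and $C$ may have $\omega^\alpha$ as its only accumulation point, i.e.\ $C\in\conv_\alpha$. So the contradiction is never reached.

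\textbf{How to close it.} The difficulty comes from requiring $f[C_k]$ to lie in one $I_{n_k}$. All you need is that $f[C_k]\subseteq\bigcup_{n\ge N_k}I_n$ and that $f[C_k]\cap I_n$ has finite derived set for every $n$. Since $R\notin\conv_\alpha$, the set $R^d$ is infinite, so it contains some $\nu_k$ with $\mu_k<\nu_k<\omega^\alpha$. Take an injective sequence in $R\cap(\mu_k,\nu_k)$ converging to $\nu_k$.
\begin{enumerate}
\item If infinitely many of its $f$-values lie in one $I_n$, pass to a subsequence inside that $I_n$ whose values converge.
\item Otherwise its image already meets every $I_n$ in a finite set.
\end{enumerate}
Put $\mu_{k+1}=\nu_k+1$ and $N_{k+1}=N_k+1$, starting with $\beta_{N_0}\ge 2$. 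Then $C=\bigcup_k C_k$ has the infinitely many accumulation points $\nu_k$, so $C\notin\conv_\alpha$. On the other side, $f[C]\cap\omega^{\beta_{N_0}}=\emptyset$, and $f[C]\cap I_n$ is the union of the finitely many sets $f[C_k]\cap I_n$ with $N_k\le n$, each having at most one accumulation point. Hence $f[C]\in\conv_{<\alpha}$, which gives the contradiction. With this change the second bullet of your dichotomy becomes unnecessary.

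\textbf{Comparison with the paper.} Once completed this way, your route differs from the paper's. The paper confines the preimage sequences to bounded intervals $[\omega^{\alpha_k},\omega^{\alpha_k}+\omega^{\alpha_{n_i}}]$. It then needs a separate claim that the image of one such interval has infinitely many accumulation points above $\omega^{\alpha_{n_i}}$, proved by building a reduction $\conv_{\alpha_{n_i}}\le_K\conv_{\alpha_{n_i}+1}$. Your domain-side set $R$ yields bounded limits directly and gives a shorter proof, but only once the step above is actually carried out.
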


\begin{proof}
    (1a) It is witnessed by the map $id:\omega^{\beta}+1\to\omega^\alpha+1$, $id(\xi)=\xi$ for each $\xi\in\omega^\beta+1$. 

    (1b) Take a strictly increasing sequence $(\alpha_n)$ that is cofinal in $\alpha$ with $\alpha_0\geq 2$ and let $f_n :\omega^{\alpha_n}+1\to\omega$ be the witness for $\I\leq_K \conv_{\alpha_n}$ for each $n$. Define a function $f:\omega^\alpha+1\to\omega$ by $$f=f_0\cup \bigcup_{n>0} f_n \restriction (\omega^{\alpha_{n-1}}, \omega^{\alpha_n}]\cup \{(\omega^\alpha,0)\}.$$ Then $f$ witnesses $\I\leq_K\conv_{<\alpha}$. To see this, take any $A\notin\conv_{<\alpha}$. Then there is some $\beta<\alpha$ with   $A\notin\conv_{\beta}$. Let $n$ be the smallest natural number with $\alpha_n\geq\beta$. Then $A\notin\conv_{\alpha_n}$. Since  $\omega^{\alpha_n}+1=[0,\omega^{\alpha_0}]\cup\bigcup_{0<i\leq n}(\omega^{\alpha_{i-1}}, \omega^{\alpha_i}]$, there is some $k\in \omega $ with  $0<k\leq n$ such that $A\cap(\omega^{\alpha_{k-1}}, \omega^{\alpha_k}] \notin \conv_{\alpha_k} $. And since $f_k$ is a witness for $\I\leq_K \conv_{\alpha_k}$, we get that $f[A] \supseteq f[A\cap(\omega^{\alpha_{k-1}}, \omega^{\alpha_k}]]=f_k[A\cap(\omega^{\alpha_{k-1}}, \omega^{\alpha_k}]]\notin\I$.

    (2) 
     The inequality $\conv_\alpha\leq_K\conv_{<\alpha}$ follows from item (1b) and Proposition~\ref{prop:hasldkfhas}.
    Below, we prove that the reversed inequality does not hold.
    Suppose for the sake of contradiction that there is some $f : \omega ^\alpha + 1 \to \omega ^\alpha + 1$ such that $f^{-1}[B]\in \conv_\alpha$ for every $B\in\conv_{<\alpha}$. Take a strictly increasing sequence $(\alpha_n)$ that is cofinal in $\alpha$ with $\alpha_0=2$. 
    We will construct sequences $(k_i)_{i\geq 1}$, $(n_i)_{i\in\omega}$, and $(x^i_n)_{n\in\omega}$, $(y^i_n)_{n\in\omega}$ for $i\in\omega$ with the following properties:
    \begin{itemize}
        \item the sequences $(k_i)_{i\geq 1}$ and $(n_i)_{i\in\omega}$ are strictly increasing and $n_i<k_{i+1}$ for each $i\in\omega$,
        \item $\{x^0_n:n\in\omega\}\subset [0,\omega^2]$ and $\{x^i_n:n\in\omega\}\subset [\omega^{\alpha_{k_i}},\omega^{\alpha_{k_i}} + \omega^{\alpha_{n_{i-1}}})$ for each $i\geq 1$,
         \item $\{y^0_n:n\in\omega\}\subset [0,\omega^{\alpha_{n_0}})$ and $\{y^i_n:n\in\omega\}\subset [\omega^{\alpha_{n_{i-1}}},\omega^{\alpha_{n_i}})$ for each $i\geq 1$, \item $(x^i_n)_n$ and $(y^i_n)_n$ are convergent, and $f(x^i_n)=y^i_n$ for all $n$ and $i$.
    \end{itemize}
Suppose we have constructed the mentioned sequences. Let $A=\{x^i_n : i,n\in\omega\}$ and $B=\{y^i_n : i,n\in\omega\}$. Then $A\notin \conv_\alpha$ and $B\in\conv_{<\alpha}$. However, $A\subset f^{-1}[B]$ and since $f$ is the Kat\v{e}tov reduction $f^{-1}[B]\in\conv_\alpha$, a contradiction.

To finish the proof of item (2), we need to construct $k_i$, $n_i$, and $(x^i_n)$, $(y^i_n)$. 
We proceed by induction  on $i\in\omega$. For $i=0$, note that  since $[0,\omega^2]\notin\conv_\alpha$ and $f$ is the witness for the Kat\v{e}tov inequality, $f[[0,\omega^2]]\notin\conv_{<\alpha}$, so there is some $n_0$ such that $f[[0,\omega^2]]\cap\omega^{\alpha_{n_0}}\notin\conv_{\alpha_{n_0}}$. Hence there is an injective convergent sequence $(y^0_n)_n$ with values in the set $ f[[0,\omega^2]]\cap\omega^{n_0}$.
We can choose an injective sequence $(x^0_n)_n $ with values in the interval $[0,\omega^2]$ such that $f(x^0_n)=y^0_n$ for each $n$.
Without loss of generality,  we can assume that the sequence  $(x^0_n)_n$ is convergent and that $(y^0_n)_n$ remains convergent. 

Suppose we have already constructed $k_i, n_i, (x^i_n)_{n\in\omega}$ and $(y^i_n)_{n\in\omega}$ for some $i$ that satisfy the conditions above.

\begin{claim}    
There is some $k_{i+1}>\max(n_i,k_i)$ such that the set $$(f[[\omega^{\alpha_{k_{i+1}}},\omega^{\alpha_{k_{i+1}}}+\omega^{\alpha_{n_i}}]])^d\setminus (\omega^{\alpha_{n_i}}+1)$$ is infinite.
\end{claim}

\begin{proof}[Proof of Claim]
Suppose for the sake of contradiction that the set $(f[[\omega^{\alpha_k},\omega^{\alpha_k}+\omega^{\alpha_{n_i}}]])^d\setminus (\omega^{\alpha_{n_i}}+1)$ is finite for every  $k>\max(n_i,k_i)$.
For every such $k$, define the set $$F_k=f[[\omega^{\alpha_k},\omega^{\alpha_k}+\omega^{\alpha_{n_i}}]]\setminus (\omega^{\alpha_{n_i}}+1)$$ and note that since $(F_k)^d$ is finite, $F_k\in\conv_{<\alpha}$, and therefore $f^{-1}[F_k]\in\conv_\alpha$.  Let $g: \bigcup_{k>\max(n_i,k_i)} [\omega^{\alpha_k},\omega^{\alpha_k}+\omega^{\alpha_{n_i}}]\to\omega^{\alpha_{n_i}}+1$ be a function such that 

\begin{enumerate}
    \item $g$ coincides with $f$ on the set
$\bigcup_{k>\max(n_i,k_i)} [\omega^{\alpha_k},
 \omega^{\alpha_k}+\omega^{\alpha_{n_i}}]\setminus f^{-1}[F_k]$,

 \item $g\restriction ([\omega^{\alpha_k},\omega^{\alpha_k}+\omega^{\alpha_{n_i}}]\cap f^{-1}[F_k])$ is injective for each $k>\max(n_i,k_i)$,
 \item $g[ [\omega^{\alpha_k},\omega^{\alpha_k}+\omega^{\alpha_{n_i}}]\cap f^{-1}[F_k]]\subset[\omega\cdot k, \omega\cdot (k+1)).$ 

 \end{enumerate}

We will show that $g $ witnesses $\conv_{\alpha_{n_i}}\leq_K \conv(\bigcup_{k>\max(n_i,k_i)} [\omega^{\alpha_k},\omega^{\alpha_k}+\omega^{\alpha_{n_i}}])$ which will give us a contradiction since 
$$\conv\left(\bigcup_{k>\max(n_i,k_i)} [\omega^{\alpha_k},\omega^{\alpha_k}+\omega^{\alpha_{n_i}}]\right) \approx \conv_{\alpha_{n_{i}}+1}\not\geq_K\conv_{\alpha_{n_i}}.$$

To see that the function $g$ witnesses the Kat\v{e}tov  reduction, take any $A\notin  \conv(\bigcup_{k>\max(n_i,k_i)} [\omega^{\alpha_k},\omega^{\alpha_k}+\omega^{\alpha_{n_i}}])$. Since $A^d$ is infinite we have two cases.

\emph{Case 1.} There is  $k>\max(n_i,k_i)$ such that $A\cap [\omega^{\alpha_k},\omega^{\alpha_k}+\omega^{\alpha_{n_i}}]\notin \conv([\omega^{\alpha_k},\omega^{\alpha_k}+\omega^{\alpha_{n_i}}])$. Then $B=A\cap [\omega^{\alpha_k},\omega^{\alpha_k}+\omega^{\alpha_{n_i}}]\setminus f^{-1} [F_k]\notin \conv([\omega^{\alpha_k},\omega^{\alpha_k}+\omega^{\alpha_{n_i}}]) $, so $g[B]=f[B]\notin \conv_{<\alpha}$. However, $g[B]\subset [0,\omega^{\alpha_{n_i}}]$ and therefore $g[B]\notin\conv_{\alpha_{n_i}}.$

\emph{Case 2.} The set $\{l>\max(n_i,k_i) : |A\cap [\omega^{\alpha_l}, \omega^{\alpha_l}+\omega^{\alpha_{n_i}}]|=\omega\}$ is infinite.
Let $\{l_j :j\in\omega\}$ be an
enumeration of the set above, and let $B_l=A\cap [\omega^{\alpha_l}, \omega^{\alpha_l}+\omega^{\alpha_{n_i}}]$ for every $l$.
Then
$$g\left[\bigcup_{j\in\omega} B_{l_j}\right]=f\left[\bigcup_{j\in\omega} B_{l_j}\setminus f^{-1}[F_{l_j}]\right]\cup g\left[\bigcup_{j\in\omega} B_{l_j}\cap f^{-1}[F_{l_j} ]\right] .$$
Since $\bigcup_{j\in\omega} B_{l_j}\notin\conv_\alpha$, either $\bigcup_{j\in\omega} B_{l_j}\setminus f^{-1}[F_{l_j}]\notin\conv_\alpha$ or  $\bigcup_{j\in\omega} B_{l_j}\cap f^{-1}[F_{l_j} ]\not\in\conv_\alpha$. In the former case, it follows that $f[\bigcup_{j\in\omega} B_{l_j}\setminus f^{-1}[F_{l_j}]]\notin\conv_{<\alpha}$. But $f[\bigcup_{j\in\omega} B_{l_j}\setminus f^{-1}[F_{l_j}]]\subset [0,\omega^{\alpha_{n_i}}]$ and therefore $g[A]\supseteq f[\bigcup_{j\in\omega} B_{l_j}\setminus f^{-1}[F_{l_j}]]\notin\conv_{\alpha_{n_i}}$. In the latter case, let us note that since $f^{-1}[F_{l_j}]\in\conv_\alpha$ for each $j$, the set $\{j\in\omega : |B_{l_j}\cap f^{-1}[F_{l_j}]|=\omega\}$ is infinite. By the definition of $g$, we get that $ g[\bigcup_{j\in\omega} B_{l_j}\cap f^{-1}[F_{l_j} ]] \notin\conv_{\alpha_{n_i}}$.
\end{proof}
Let  $k_{i+1}$ be as  stated in the claim. Then there is some $n_{i+1}$ and an injective convergent sequence $(y^{i+1}_n)_n$ with $$\{y^{i+1}_n:n\in\omega\}\subset f[[\omega^{\alpha_{k_{i+1}}},\omega^{\alpha_{k_{i+1}}}+\omega^{\alpha_{n_i}}]\cap (\omega^{\alpha_{n_i}},\omega^{\alpha_{n_{i+1}}}). $$ 
 Let $ (x^{i+1}_n)_n$ be a sequence such that 
 $$\{x^{i+1}_n:n\in\omega \} \subset [\omega^{\alpha_{k_{i+1}}},\omega^{\alpha_{k_{i+1}}}+\omega^{\alpha_{n_i}}] \quad  \text{and} \quad f(x^{i+1}_n)=y^{i+1}_n$$ for each $n$. Without loss of generality, we can assume it is injective and covergent. Hence we are done with the construction.
\end{proof}

\begin{proposition}
   The ideal $\conv_{<\alpha}$ is $\Pi^0_5$-complete
   for every limit ordinal $\alpha$.
\end{proposition}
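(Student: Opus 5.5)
The upper bound comes straight from the definition. We have $A\in\conv_{<\alpha}$ iff $\forall\beta<\alpha\ (A\cap\omega^\beta\in\conv_\beta)$. It suffices to quantify over a cofinal sequence $(\alpha_n)$ in $\alpha$, since $A\cap\omega^{\alpha_n}\in\conv_{\alpha_n}$ implies $A\cap\omega^\beta\in\conv_\beta$ for $\beta\le\alpha_n$. The latter holds because $\conv_\beta$ is the restriction of $\conv_{\alpha_n}$ to $\omega^\beta+1$: derived sets in $\omega^\beta+1$ and in $\omega^{\alpha_n}+1$ of a subset of $\omega^\beta$ agree up to the point $\omega^\beta$. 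Each map $A\mapsto A\cap\omega^{\alpha_n}$ is continuous, and each $\conv_{\alpha_n}$ is $\Sigma^0_4$ by \cite[Corollary~7.4]{critical-ideals-RF-MK-AK}. So $\conv_{<\alpha}$ is a countable intersection of $\Sigma^0_4$ sets, hence $\Pi^0_5$.

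For hardness, we reduce a known $\Pi^0_5$-complete set to $\conv_{<\alpha}$. The standard choice is
$$P_5=\{x\in (2^\omega)^\omega : \forall n\ (x_n\in S)\},$$
where $S\subseteq 2^\omega$ is any $\Sigma^0_4$-complete set. This is $\Pi^0_5$-complete by the usual product argument (see \cite{MR1321597}). Fix the cofinal sequence $(\alpha_n)$ with $\alpha_0\ge 2$. Since each $\conv_{\alpha_{n}}$ is $\Sigma^0_4$-complete, the ideal $\conv$ on the block $(\omega^{\alpha_{n-1}},\omega^{\alpha_n}]$ (together with $[0,\omega^{\alpha_0}]$ for $n=0$) is isomorphic to $\conv_{\alpha_n}$ restricted to a copy, and is also $\Sigma^0_4$-complete. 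So there are continuous maps $\varphi_n:2^\omega\to\mathcal P(I_n)$, where $I_n$ is the open-above interval $(\omega^{\alpha_{n-1}},\omega^{\alpha_n})$ order-isomorphic to $\omega^{\alpha_n}$, such that $x\in S\iff\varphi_n(x)\in\conv(I_n)$. Here $\conv(I_n)$ is computed inside $[\omega^{\alpha_{n-1}},\omega^{\alpha_n}]$ and is isomorphic to $\conv_{\alpha_n}$. We may assume $\varphi_n(x)$ avoids the endpoints. Define
$$\Phi(x)=\bigcup_n\varphi_n(x_n)\subseteq\omega^\alpha.$$
This map is continuous because the $I_n$ are pairwise disjoint and each coordinate of $\Phi(x)$ depends on one $\varphi_n$ only.

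The main check is that $\Phi(x)\in\conv_{<\alpha}\iff\forall n\ \varphi_n(x_n)\in\conv(I_n)$. Given $\beta<\alpha$, pick $m$ with $\alpha_m\ge\beta$. Then $\Phi(x)\cap\omega^\beta$ lies in $\bigcup_{n\le m}I_n$, a finite union of intervals whose closures meet only at the endpoints $\omega^{\alpha_n}$. Its derived set in $\omega^\beta+1$ is finite iff each piece has finite derived set, using the criterion that $A\in\conv(D)$ iff $A^d$ is finite. So membership of $\Phi(x)$ in $\conv_{<\alpha}$ reduces to $\varphi_n(x_n)\in\conv(I_n)$ for every $n$. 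One subtlety is that $\Phi(x)\cap\omega^\beta$ might cut a block $I_m$ at $\omega^\beta$. This is harmless, since a set in $\conv(I_m)$ restricts to a set in $\conv$, and conversely taking $\beta=\alpha_m$ itself recovers the full block.

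The step I expect to need the most care is transporting $\Sigma^0_4$-completeness of $\conv_{\alpha_n}$ to a reduction landing in the open block $I_n$ and avoiding the endpoint $\omega^{\alpha_n}$, which is a limit of earlier blocks. To handle it, I would first reduce $S$ to $\conv_{\alpha_n}$ on $\omega^{\alpha_n}+1$, then delete the single top point. Removing a point changes neither membership nor continuity. Then I transfer along the order isomorphism $\omega^{\alpha_n}\cong[\omega^{\alpha_{n-1}},\omega^{\alpha_n})$, which extends to a homeomorphism of the compactifications and so preserves $\conv$, and finally drop the bottom point. With this, $\Phi$ reduces $P_5$ to $\conv_{<\alpha}$, which gives completeness.
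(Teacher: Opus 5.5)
Your proof is correct and follows the paper's route: you show the upper bound by writing $\conv_{<\alpha}$ as a countable intersection of continuous preimages of the $\Sigma^0_4$ ideals, and for hardness you place independent $\Sigma^0_4$-reductions into the disjoint blocks between consecutive $\omega^{\alpha_n}$ and take the union. The paper does the same thing directly for an arbitrary $\Pi^0_5$ set $A=\bigcap_n A_n$ rather than passing through the canonical complete set $S^\omega$; your extra care about block endpoints is harmless but not actually needed.
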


\begin{proof}
To see that the ideal $\conv_{<\alpha}$ is $\Pi^0_5$, note that $$\conv_{<\alpha}=\bigcap_{\beta\in\alpha}\{A\subset\omega^\alpha+1 : A\cap\omega^{\beta}\in\conv_{\beta}\},$$
where the set $C_\beta=\{A\subset\omega^\alpha+1 : A\cap\omega^{\beta}\in\conv_{\beta}\}$ is $\Sigma^0_4$ for each $\beta$, since $\conv_\beta\in\Sigma^0_4$ and the set $C_\beta$ is homeomorphic to the set $\conv_\beta \times \cP((\omega^\beta,  \omega^\alpha])$.

To show completeness, take any $A\in\Pi^0_5(X)$, where $X$ is a zero-dimensional Polish space. We need to find a continuous function $f: X\to \cP(\omega^\alpha+1)$ with $f^{-1}[\conv_{<\alpha}]=A$. Take a strictly increasing sequence $(\alpha_k)_k$ cofinal in $\alpha$ with $\alpha_0=0$ and $\alpha_1\geq 2$. Since $A=\bigcap \{A_n :n\in\omega\}$ for some $A_n\in\Sigma^0_4(X)$ and $\conv([\omega^{\alpha_n},\omega^{\alpha_{n+1}}))$ is $\Sigma^0_4$-complete (as it is isomorphic to $\conv_{\alpha_{n+1}})$,  there exists a continuous function $f_n:X\to \cP([\omega^{\alpha_n},\omega^{\alpha_{n+1}}))$ with $f_n^{-1}[\conv([\omega^{\alpha_n},\omega^{\alpha_{n+1}}))]=A_n $
for each $n$. Define $f:X\to \cP(\omega^\alpha+1)$ by $$f(x)=\bigcup_n f_n(x).$$ Then $f$ is continuous and 
\begin{equation*}
    \begin{split}   
f^{-1}[\conv_{<\alpha}]
=& 
\bigcap_{n\in\omega}\left\{x\in X : f(x)\cap[\omega^{\alpha_n},\omega^{\alpha_{n+1}})\in \conv([\omega^{\alpha_n},\omega^{\alpha_{n+1}}))\right\}
\\=&
\bigcap_{n\in\omega}\left\{x\in X : f_n(x)\in \conv([\omega^{\alpha_n},\omega^{\alpha_{n+1}}))\right\}
=\bigcap_{n\in\omega}A_n=A.
   \qedhere
   \end{split}
\end{equation*}
\end{proof}

Having found the greatest lower bound for the families $\{\conv_\beta : \beta<\alpha\}$ for all limit $\alpha$, we now turn to the problem of finding the greatest lower bound for $\{\conv_\alpha : \alpha\in\omega_1\}$. 
We have not yet been able to determine it, nor to establish whether it exists at all, and thus the following question remains open.
\begin{question}
Does the family
$\{\conv_\alpha : \alpha\in\omega_1\}$
have the greatest lower bound in the Kat\v{e}tov order?
\end{question}

By Proposition~\ref{prop:hasldkfhas}
, the ideal $\conv$ is a lower bound of the family $\{\conv_\alpha : \alpha\in\omega_1\}$. We conjecture, however, that it is not the greatest lower bound. One could try to imitate the construction of the ideals $\conv_{<\alpha}$ in this case, however the main obstacle in this approach  
is that the family  of ideals which must be managed  has size $\omega_1$, whereas we  need an ideal defined on a countable set. A possible solution is suggested by the predecessor of the critical ideals $\conv_\alpha$, namely the ideal $\conv$. It is determined by an uncountable set $[0,1]$, but it is defined on its countable dense subset $\Q$. Accordingly, we pursue an analogous strategy and employ the theorem of Hewitt, Marczewski, Pondiczery (see e.g. \cite[Theorem
2.3.15]{MR1039321}) from  which it follows that the product of $\omega_1$ separable topological spaces is separable.   

\begin{definition}
    Let $D$ be a countable dense subset of the space 
    $$X = \prod_{2\leq \alpha < \omega_1} \omega^\alpha+1$$ and define the ideal
    $\conv_{\omega_1}=\{A\subset D: \forall \alpha\, \{f(\alpha) : f\in A\}\in\conv_\alpha\}.$
\end{definition}

\begin{proposition}
    $\conv_{\omega_1}\leq\conv_\alpha$ for each $\alpha$.
\end{proposition}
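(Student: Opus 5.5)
The natural Kat\v{e}tov reduction is the projection onto the $\alpha$-th coordinate. Fix $\alpha$ and let $\pi_\alpha\colon X\to\omega^\alpha+1$, $\pi_\alpha(f)=f(\alpha)$. We need a map from $\omega^\alpha+1$ (the underlying set of $\conv_\alpha$) into $D$ (the underlying set of $\conv_{\omega_1}$) such that preimages of sets in $\conv_{\omega_1}$ lie in $\conv_\alpha$. The plan is to choose a right inverse of $\pi_\alpha$ restricted to $D$.

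\textbf{Step 1: construct $h$.} Since $D$ is dense in $X$ and every point $\xi\in\omega^\alpha+1$ gives a nonempty basic open set $\pi_\alpha^{-1}[\{\xi\}]$ whenever $\xi$ is isolated, we can pick $h(\xi)\in D$ with $h(\xi)(\alpha)=\xi$ for each isolated $\xi$. Limit points $\xi$ are not isolated, so $\pi_\alpha^{-1}[\{\xi\}]$ need not meet $D$. For these points we define $h(\xi)$ to be an arbitrary element of $D$. The only requirement is that $h$ maps limit points somewhere harmless, say to a single fixed $d_0\in D$.

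\textbf{Step 2: verify the reduction.} Take $A\in\conv_{\omega_1}$. Then $\{f(\alpha):f\in A\}\in\conv_\alpha$. Write $h^{-1}[A]=S\cup L$, where $S$ consists of isolated points and $L$ of limit points. For $\xi\in S$ we have $\xi=h(\xi)(\alpha)\in\{f(\alpha):f\in A\}$, so $S$ is contained in a member of $\conv_\alpha$ and hence $S\in\conv_\alpha$.

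\textbf{Main obstacle: the limit points.} The set $L$ of limit points of $\omega^\alpha+1$ lying in $h^{-1}[A]$ may be all of them, and the set of limit points of $\omega^\alpha+1$ is not in $\conv_\alpha$. The fix is to send limit points injectively to points whose $\alpha$-coordinates approximate them. Using the first countability of $\omega^\alpha+1$ and the density of $D$, choose $h(\xi)\in D$ so that $h(\xi)(\alpha)$ is an isolated ordinal in a neighbourhood $(\eta_\xi,\xi]$, and choose $h$ injective on limit points. More robustly, enumerate the limit points as $\xi_0,\xi_1,\dots$ and choose $h(\xi_n)(\alpha)$ isolated with $d(\xi_n,h(\xi_n)(\alpha))<2^{-n}$ under a compatible metric. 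Then, if $\{h(\xi)(\alpha):\xi\in h^{-1}[A]\}$ has finitely many accumulation points, so does $h^{-1}[A]$. The reason is that any accumulation point of $L\cap h^{-1}[A]$ is an accumulation point of the corresponding chosen coordinates, since the distances tend to $0$. Here we use the characterization of $\conv(D)$ via finite derived sets, valid because $\omega^\alpha+1$ is compact metrizable. Combining $S$ and $L$ then gives $h^{-1}[A]\in\conv_\alpha$, which is exactly $\conv_{\omega_1}\leq_K\conv_\alpha$.
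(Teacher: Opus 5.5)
Your final construction is correct and is essentially the paper's argument. You lift isolated ordinals to points of $D$ via density, send the $n$-th limit point to a lift of an isolated ordinal within $2^{-n}$ of it, and then compare derived sets using only the $\alpha$-th coordinate. The paper makes the perturbation explicit and uniform instead: it sends every $\xi<\omega^\alpha$ to a lift $d_{\xi+1}$ of $\xi+1$ (and $\omega^\alpha$ to $d_1$), so $A^d\subseteq\{\xi+1:\xi\in A\setminus\{\omega^\alpha\}\}^d$ follows directly from $\sup_n(\xi_n+1)=\sup_n\xi_n$, with no metric, no enumeration and no split into isolated and limit points. Your preliminary variants (a fixed $d_0$, or an arbitrary isolated point in $(\eta_\xi,\xi]$) can simply be dropped.
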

\begin{proof}
For each successor ordinal $\xi\in\omega^\alpha+1$ pick $d_\xi\in D$ with $d_\xi(\alpha)=\xi$. Define $f:\omega^\alpha+1\to D$ by 
\begin{equation*}
f(\xi) =
\begin{cases}
d_{\xi+1} & \xi \in \omega^\alpha,\\
d_1&  \xi= \omega^\alpha.
    \end{cases}
\end{equation*}
Then $f$ witnesses $\conv_{\omega_1}\leq\conv_\alpha$. To see this take any $A\notin\conv_\alpha$ and note that $f[A]\supset\{d_{\xi+1} : \xi\in A\setminus\{\omega^\alpha\}\}$. If we show that $B_\alpha=\{d_{\xi+1}(\alpha):\xi \in A\setminus\{\omega^\alpha\}\}\notin\conv_\alpha$ the proof will be finished, therefore it is enough to show that $A^d\subset B^d_\alpha$. Take any $\lambda\in A^d$. Then there is some increasing sequence $(\xi_n)_n$ with values in $A$ such that $\sup_n\xi_n=\lambda $. Then $\sup_n(\xi_n+1)=\lambda$ and $\xi_n+1=d_{\xi_n+1}(\alpha)\in B^d_\alpha $ for each $n$, hence $\lambda\in B^d_\alpha$. 
\end{proof}

\begin{question}
Is the ideal $\conv_{\omega_1}$
the greatest lower bound of the family 
$\{\conv_\alpha : \alpha\in\omega_1\}$
with respect to the Kat\v{e}tov order?
\end{question}


\section{Cardinal characteristics of critical ideals for countable compact spaces}
\label{subsec:cardinal-characteristics}
We now turn to the calculation of certain cardinal invariants of the ideals $\conv_\alpha$.

Some properties of ideals can be described by cardinal characteristics associated with them. There are four well known cardinal characteristics called additivity, covering, uniformity, and cofinality defined in the following way
for an ideal $\I$ on $X$ (see e.g.~\cite{MR1350295}):
$\add(\I) =\min\left\{|\cA|:\cA\subseteq\I\land \bigcup\cA\notin\I\right\}$,
$\cov(\I)  = \min\left\{|\cA|: \cA\subseteq\I\land \bigcup\cA=X\right\}$, 
$\non(\I)  = \min\{|A|:A\notin \I\}$, 
$\cof(\I)  =\min\{|\cA|:\cA\subseteq\I\land \forall B\in\I \, \exists A\in\cA \, (B\subseteq A)\}$.
These  characteristics are useful in the case of ideals on an uncountable set $X$ (for instance in the case of the $\sigma$-ideal  of all meager sets and the $\sigma$-ideal of all Lebesgue null sets). 
Apart from the cofinality, the other cardinal
characteristics are trivial, since they are all equal to $\aleph_0$.
Fortunately, 
Hern\'{a}ndez and Hru\v{s}\'{a}k
introduced in \cite{MR2319159} (see also~\cite{MR1686797} and \cite{MR2777744}) certain versions of these characteristics more suitable for tall ideals on countable sets:
\begin{equation*}
\begin{split}
\adds(\I) & =\min\{|\cA|:\cA\subseteq\I\land \neg \exists B\in\I \, \forall A\in\cA \, (A\subset^* B)\}, \\
\covs(\I)  & = \min\{|\cA|: \cA\subseteq\I\land \forall B\in [\omega]^{\aleph_0} \, \exists A\in\cA \, (|A\cap B|=\aleph_0)\},\\
\nons(\I) & = \min\{|\cA|:\cA\subseteq [\omega]^\omega\land \forall B\in\I \, \exists A\in\cA \, (|A\cap B|<\aleph_0)\}, \\
\cofs(\I) & =\min\{|\cA|:\cA\subseteq\I\land \forall B\in\I \, \exists A\in\cA \, (B\subset^* A)\}.
\end{split}
\end{equation*}

It is known (see e.g.~\cite{MR2777744}) that 
$\adds(\conv)=\adds(\Fin\otimes\Fin)=\aleph_0$,
$\nons(\conv)=\nons(\Fin\otimes\Fin)=\aleph_0$,
$\covs(\conv)=\cofs(\conv)=\continuum$, 
$\covs(\Fin\otimes\Fin)=\bnumber$ and  $\cofs(\Fin\otimes\Fin)=\dnumber$, where
$\bnumber$  is the \emph{bounding number} i.e.~the smallest cardinality of any unbounded family in the poset $(\omega^\omega,\leq^*)$ and 
$\dnumber$ is the \emph{dominating number} i.e.~the smallest cardinality of
any dominating family in the poset $(\omega^\omega,\leq^*)$ (for more on these cardinals see for instance \cite{MR2768685}). 

The following theorem provide the values of the above mentioned characteristics for critical ideals considered in the paper.
\begin{proposition}[\cite{critical-ideals-RF-MK-AK}] 
For every $\alpha$, 
$$\nons(\conv_\alpha) = \adds(\conv_\alpha)= \aleph_0, \quad 
\covs(\conv_\alpha)= \bnumber\quad\text{and} \quad \cofs(\conv_\alpha)=\dnumber.$$
\end{proposition}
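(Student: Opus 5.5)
The proof splits into four computations. Throughout I use the characterization $A\in\conv_\alpha \iff A^d$ is finite, together with Proposition~\ref{prop:gwiazdka}.

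\emph{$\nons(\conv_\alpha)=\aleph_0$.} I would exhibit a countable family $\cA\subseteq[\omega^\alpha+1]^\omega$ such that every $B\in\conv_\alpha$ is almost disjoint from some member of $\cA$.

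\begin{itemize}
\item Write $\omega^\alpha+1$ as the disjoint union of infinitely many pieces $P_m$ of the form $[\omega^{\beta}\cdot k,\omega^\beta\cdot(k+1))$ or similar, each order isomorphic to a copy of $\omega^{\gamma}$ with $\gamma\ge 1$.
\item Let $\cA$ consist of the countably many sets $\{\text{one point of limit type from } P_m : m\ge j\}$, or more simply the sets $S_j=\bigcup_{m\geq j} P_m$.
\item Since $B$ has finitely many accumulation points, $B$ lives near finitely many limits. A cleaner choice is to let $\cA=\{C_\lambda\}$ range over countably many disjoint sets $C_j$ with $C_j^d=\{\lambda_j\}$ for distinct limits $\lambda_j$ (for instance $\lambda_j=\omega\cdot(j+1)$ and $C_j=(\omega\cdot j,\omega\cdot(j+1))$).
\item Any $B\in\conv_\alpha$ has only finitely many accumulation points, so for some $j$ the limit $\lambda_j\notin B^d$. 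Since $C_j$ converges to $\lambda_j$, the intersection $B\cap C_j$ is finite.
\end{itemize}

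\emph{$\adds(\conv_\alpha)=\aleph_0$.} I would take the same disjoint family $C_j$ and assume some $B\in\conv_\alpha$ almost contains every $C_j$. Then $B^d\supseteq\{\lambda_j:j\in\omega\}$ is infinite, a contradiction.

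\emph{$\covs$ and $\cofs$.} Here I use the Kat\v{e}tov structure.

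\begin{itemize}
\item By Proposition~\ref{prop:hasldkfhas}(1), $\conv_\alpha\le_K\conv_2\approx\Fin\otimes\Fin$.
\item By Proposition~\ref{prop:hasldkfhas}(3) this reduction can be taken to be Kat\v{e}tov--Blass.
\item The standard monotonicity facts (for $\J\le_{KB}\I$, $\covs(\J)\ge\covs(\I)$, and so on) then give $\covs(\conv_\alpha)\geq\bnumber$.
\end{itemize}

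For the reverse inequality $\covs(\conv_\alpha)\le\bnumber$, I would build, from an unbounded family $\{h_\xi:\xi<\bnumber\}$ closed under finite modifications, for each $h$ a set $A_h\in\conv_\alpha$: the union of initial segments of length $h(n)$ of fixed convergent sequences to each limit, arranged along a fixed enumeration so that $A_h^d$ is finite. I would then show every infinite $B$ meets some $A_h$ infinitely. If $B$ has an accumulation point, use a countable convergent subsequence and unboundedness. If not, $B$ is of discrete type, and a diagonal argument handles it.

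For $\cofs=\dnumber$, the lower bound comes from $\Fin\otimes\Fin$ embedding as a restriction: take $\conv_\alpha\restriction[0,\omega^2]\approx\conv_2$, whose $\cofs$ is $\dnumber$, and note that a cofinal family restricts to a cofinal family. For the upper bound, each $B\in\conv_\alpha$ is contained in finitely many convergent sequences. For each of the countably many limit points $\lambda$, fix a cofinal sequence $\lambda^{(m)}$ and the blocks $(\lambda^{(m)},\lambda^{(m+1)}]$. A convergent sequence to $\lambda$ meets each block in only finitely many points, and I expect it to be bounded, modulo finite, by a function from a dominating family. Hence the sets $\bigcup_{\lambda\in F}\bigcup_m(\text{the first } g(m) \text{ points in block } m \text{ of } \lambda)$ with $F$ finite and $g$ dominating form a cofinal family of size $\dnumber$.

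The main obstacle is this last step: blocks at limits of high rank are infinite, so ``the first $g(m)$ points'' is not well defined. I would handle this by induction on rank, working with the countably many limit points of smaller rank inside each block and noting that a sequence converging to $\lambda$ eventually avoids any neighborhood of those. Coding everything by one function via a bijection of $\omega$ with the countable set of limits keeps the family of size $\dnumber$. The same induction is needed for the $\covs\le\bnumber$ construction.
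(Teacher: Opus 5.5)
The paper only quotes this proposition, so your argument is judged on its own. Your computations of $\nons(\conv_\alpha)$ and $\adds(\conv_\alpha)$ via $C_j=(\omega\cdot j,\omega\cdot(j+1))$ are correct. Both lower bounds are also correct: $\covs(\conv_\alpha)\ge\covs(\conv_2)=\bnumber$ through the Kat\v{e}tov--Blass reduction (your monotonicity goes the right way), and $\cofs(\conv_\alpha)\ge\cofs(\conv_2)=\dnumber$ by restricting a cofinal family to the clopen set $[0,\omega^2]$.

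The gap is in the two upper bounds. You rest them on an induction on rank that is never carried out. Your justification, that a sequence converging to $\lambda$ eventually avoids a neighbourhood of each fixed smaller limit, holds limit by limit but gives no uniform control, since such a sequence can come close to infinitely many smaller limits. The obstacle is illusory: do not use the ordinal order inside a block. For each limit $\lambda\le\omega^\alpha$ fix $\lambda_m\nearrow\lambda$ and an arbitrary enumeration $\{e^\lambda_{m,k}:k\in\omega\}$ of the countable block $(\lambda_m,\lambda_{m+1}]$. For $g\in\omega^\omega$ let $A_{\lambda,g}=\bigcup_m\{e^\lambda_{m,k}:k\le g(m)\}$. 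Each block contributes finitely many points, so $A_{\lambda,g}^d\subseteq\{\lambda\}$ and $A_{\lambda,g}\in\conv_\alpha$. Now let $S$ be the range of a sequence converging to $\lambda$. Then $S$ meets every block finitely, and for each $m_0$ all but finitely many points of $S$ lie in $(\lambda_{m_0},\lambda)$. Hence $S\subseteq^*A_{\lambda,g}$ whenever $g\ge^*g_S$, where $g_S(m)=\max\{k:e^\lambda_{m,k}\in S\}$ (with $\max\emptyset=0$). Every $B\in\conv_\alpha$ is almost covered by finitely many such $S$. So your family of finite unions $\bigcup_{\lambda\in F}A_{\lambda,g}$, with $g$ from a dominating family, gives $\cofs(\conv_\alpha)\le\dnumber$ with no induction.

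For $\covs(\conv_\alpha)\le\bnumber$, your sets $A_h$ are too loosely specified to check, and two features of the sketch are wrong. First, one set with finite derived set serves only finitely many limits. Also, points must be taken from whole blocks rather than from fixed convergent sequences, which a given $B$ may avoid. So use $\{A_{\lambda,h}:\lambda\le\omega^\alpha \text{ limit},\ h\in\mathcal{H}\}$ with $\mathcal{H}$ unbounded of size $\bnumber$. Second, there is no ``discrete type'' case: $\omega^\alpha+1$ is compact metrizable, so every infinite $B$ contains the range $B'$ of an injective sequence converging to some limit $\lambda$.

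What does need an argument is that $B'$ meets only an infinite set $D$ of $\lambda$-blocks, not all of them, so plain unboundedness does not suffice. Take $\mathcal{H}$ to consist of nondecreasing functions. For $m\in D$ let $g(m)=\min\{k:e^\lambda_{m,k}\in B'\}$, and set $\tilde g(n)=g(\min(D\setminus n))$. Pick $h\in\mathcal{H}$ with $h(n)>\tilde g(n)$ for infinitely many $n$. For such $n$, with $d=\min(D\setminus n)\ge n$, we get $h(d)\ge h(n)>g(d)$. Each $d$ arises from only finitely many $n$, so $e^\lambda_{d,g(d)}\in B'\cap A_{\lambda,h}$ for infinitely many $d$.
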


There are also the following $\omega$-versions of the aforementioned cardinal characteristics, defined and studied in \cite{MR1686797}, \cite{Alek-Miller-Nulls}, \cite{Alek-Arturo-Miller-Trees}  and  \cite{MR4905423}:

\begin{equation*}
\begin{split}
\addo(\I) & =\min\{|\cA|:\cA\subseteq\I\land \neg \exists \cB\in[\I]^\omega \, \forall A\in\cA \ \exists B\in\mathcal{B} \, (A\subset^* B)\}, \\
\nono(\I) & = \min\{|\cA|:\cA\subseteq [\omega]^\omega\cap \I\land \forall \mathcal{B}\in[\I]^\omega \, \exists A\in\cA \, \forall B\in\mathcal{B} \, (|A\cap B|<\aleph_0)\}, \\
\cofo(\I) & =\min\{|\cA|:\cA\subseteq[\I]^\omega\land \forall \mathcal{B}\in[\I]^\omega \, \exists C\in\cA \, \forall B\in\mathcal{B} \, \exists A\in\mathcal{C} (B\subset^* A)\}
\end{split}
\end{equation*}
with the convention that $\min\emptyset=\continuum^+$.
Let us point out that whenever $\I$ is tall, then 
$$\nono(\I)  = \min\{|\cA|:\cA\subseteq [\omega]^\omega\land \forall \mathcal{B}\in[\I]^\omega \, \exists A\in\cA \, \forall B\in\mathcal{B} \, (|A\cap B|<\aleph_0)\}.$$

In \cite{Alek-Arturo-Miller-Trees} and  \cite{MR4905423} the authors carried out the following computations:
$\addo(\conv)=\nono(\conv)=\omega_1$,  
$\cofo(\conv)=\continuum$, 
$\addo(\Fin\otimes\Fin)=\bnumber$ and  $\nono(\Fin\otimes\Fin)=\cofo(\Fin\otimes\Fin)=\dnumber$.

Finally, we consider the cardinal invariant 
$$\covp(\I)= \min\{|\cA|: \cA\subseteq\I\land \forall B\in \I^+ \, \exists A\in\cA \, (|A\cap B|=\aleph_0)\},$$
introduced and studied in \cite{MR4472520}, where it was also shown that $\covp(\finxfin)=\omega$ and $\covp(\conv)=\continuum$.
There are several inequalities  among all the mentioned cardinal characteristics that hold for all tall ideals, as shown in Figure~\ref{fig:cardinal-characteristics}  (see e.g.~\cite[p.~578]{MR2777744} and  \cite{Alek-Miller-Nulls},  \cite{Alek-Arturo-Miller-Trees}). Moreover, if $\I\leq_{KB}\J$, then $\nono(\I)\leq\nono(\J)$  and if $\I\leq_{K}\J$, then $\covp(\I)\geq \covp(\J)$ (see \cite{Alek-Arturo-Miller-Trees} and \cite{MR4472520} resp.).

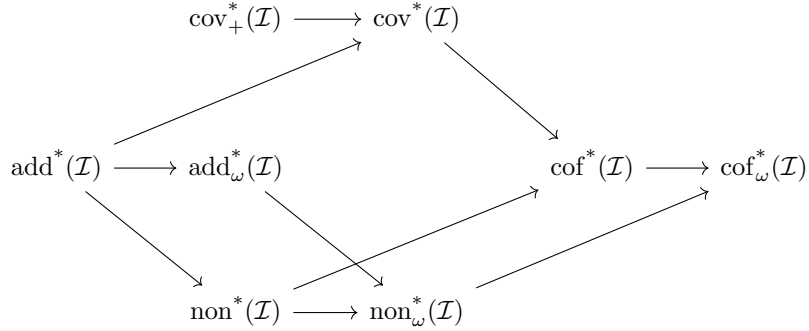
\begin{figure}
\centering

	\begin{tikzcd}
& & \covp(\I) \arrow[r] & \covs(\I) \arrow[rdd]
\\
\\
  & \adds(\I) \arrow[r] 
\arrow[rruu] \arrow[rdd] & \addo(\I) \arrow[rdd]
&&
\cofs(\I) \arrow[r] & \cofo(\I)  
\\
\\
&&\nons(\I) \arrow[r] \arrow[rruu] &\nono(\I) \arrow[rruu]
\end{tikzcd}
    \caption{Relationships between defined cardinal characteristics for tall ideals ($\kappa\to\lambda$ means $\kappa\leq \lambda$ in this diagram).}
    \label{fig:cardinal-characteristics}

\end{figure}

We say that a family $\cA$ witnesses $\addo(\I)$ whenever there is no $\cB\in[\I]^\omega$ such that for all  $ A\in\cA$ there is some  $B\in\cB$ with  $A\subset^* B$.
We say that a family $\cB$ witnesses  $|\cA|<\adds_\omega(\I)$ whenever for all $A\in\cA$ there is some $B\in\cB$ with $A\subset^*B$.
Analogous terminology applies to other cardinal invariants.

\begin{lemma}
    $$\addo(\{\emptyset\}\otimes\fin)= \bnumber
 \quad \text{and} \quad \cofo(\{\emptyset\}\otimes\fin)=\nono(\{\emptyset\}\otimes\fin) = \dnumber.$$
\end{lemma}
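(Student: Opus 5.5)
The plan is to translate everything into statements about functions $\omega\to\omega$. A set $A\subseteq\omega\times\omega$ belongs to $\{\emptyset\}\otimes\fin$ exactly when each vertical section is finite, i.e.\ when $A\subseteq D_f=\{(i,j): j\le f(i)\}$ for some $f\in\omega^\omega$. Two facts about $\subseteq^*$ follow. First, $D_f\subseteq^* D_g$ holds iff $f(i)\le g(i)$ for all but finitely many $i$, i.e.\ iff $f\le^* g$. Second, $A\subseteq^* D_g$ for $A\subseteq D_f$ forces $f\le^* g$ only on those $i$ whose section of $A$ is nonempty, so we will work with graphs of functions. The basic dictionary is: for a countable family $\{D_{g_m}\}$, let $h\ge^* g_m$ for all $m$, which exists since countable families are bounded. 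Then every $D_{g_m}\subseteq^* D_h$. Consequently, ``$A$ is $\subseteq^*$ some member of a countable family'' reduces to ``$A\subseteq^* D_h$'' only in one direction, and the countable families give slightly more room than single sets. This extra room is what must be analysed.

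\emph{Additivity.} For the lower bound, take $\cA$ with $|\cA|<\bnumber$. Each $A\in\cA$ lies in some $D_{f_A}$. Pick $h$ with $f_A\le^* h$ for all $A$. Then $A\subseteq^* D_h$, so $\cB=\{D_h\}$ witnesses the bound. For the upper bound, take an unbounded family $\{f_\xi:\xi<\bnumber\}$, which we may assume increasing, and let $A_\xi$ be the graph of $f_\xi$. Suppose a countable $\cB=\{D_{g_m}\}$ covered every $A_\xi$ mod finite. Then each $f_\xi$ satisfies $f_\xi\le^* g_m$ for some $m$, so $f_\xi\le^* h$ with $h$ dominating all $g_m$, a contradiction.

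\emph{Cofinality.} For the upper bound, fix a dominating family $\{h_\xi:\xi<\dnumber\}$ and take $\cA=\{\{D_{h_\xi}\}:\xi<\dnumber\}$, a family of singletons. Given a countable $\cB\subseteq\I$, bound all members by a single $g$ and choose $h_\xi\ge^* g$; then every $B\in\cB$ is $\subseteq^* D_{h_\xi}$. For the lower bound, suppose $\cA$ consists of countable families with $|\cA|<\dnumber$. Replace each $\mathcal{C}\in\cA$ by countably many functions $g^{\mathcal{C}}_m$, giving fewer than $\dnumber$ functions in total. Pick $f$ not dominated by any of them, and in fact not $\le^*$ any of them, since a family of fewer than $\dnumber$ functions is not dominating. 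Then the graph of $f$, taken as the one-element family $\cB$, is $\subseteq^*$ no member of any $\mathcal{C}$.

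\emph{Uniformity.} Note that $\{\emptyset\}\otimes\fin$ is not tall, since a column has no infinite subset in the ideal; so the first formula for $\nono$ must be used, with $\cA$ ranging over infinite sets in $\I$. For the lower bound, take $\cA\subseteq\I$ infinite with $|\cA|<\dnumber$. Each $A$ is infinite with finite sections, so the set $I_A$ of columns it meets is infinite; choose $a_A(i)\in A$-section for $i\in I_A$. Since $|\cA|<\dnumber$, I plan to use the standard fact that there is a single $g$ such that for every $A$, $g(i)\ge a_A(i)$ for infinitely many $i\in I_A$. This is the characterization of $\dnumber$ via partial functions on infinite sets, obtained by composing with enumerations of $I_A$. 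Then $\cB=\{D_g\}$ meets every $A$ in an infinite set. For the upper bound, take a dominating family $\{f_\xi\}$ of increasing functions, with graphs $A_\xi$. Given a countable $\cB$, bound it by $g$ and choose $f_\xi$ with $f_\xi>g$ eventually. Then $A_\xi$ meets each $B\subseteq^* D_g$ only finitely.

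The main obstacle is the $\nono$ lower bound, specifically justifying the ``infinitely often on $I_A$'' version of non-domination. I expect to handle it as follows: for each $A$, let $e_A$ enumerate $I_A$ and set $\tilde a_A(n)=\max_{k\le n+1} a_A(e_A(k))$. If no single $g$ works for all $A$, then the functions $\tilde a_A\circ$(suitable reindexing) form a dominating family, which contradicts $|\cA|<\dnumber$.
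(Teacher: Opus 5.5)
Your proof is correct, but it takes a different route from the paper. The paper does not compute these values directly. It imports $\addo(\fin\otimes\fin)=\bnumber$ and $\nono(\fin\otimes\fin)=\cofo(\fin\otimes\fin)=\dnumber$ from the literature and transfers witnesses between $\fin\otimes\fin$ and $\{\emptyset\}\otimes\fin$. In one direction it deletes from each set the part inside $i_A\times\omega$ (its finitely many infinite columns); in the other it notes that witnesses for $\{\emptyset\}\otimes\fin$ still work for $\fin\otimes\fin$. Only the additivity case is written out. You instead argue from scratch with the dictionary $A\in\{\emptyset\}\otimes\fin\iff A\subseteq D_f$ for some $f$. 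What your arguments really use is that every countable subfamily of $\{\emptyset\}\otimes\fin$ is $\subset^*$-below a single $D_h$, i.e.\ the ideal is a P-ideal. So the $\omega$-versions collapse to the starred ones, and everything reduces to the definitions of $\bnumber$ and $\dnumber$, plus the partial-function form of $\dnumber$ for the lower bound on $\nono$. For the same reason, your remark that countable families give ``slightly more room'' is misleading: here they give none, and your proofs exploit exactly that. Your route is self-contained and shows why $\bnumber$ and $\dnumber$ appear; the paper's is shorter but rests on the cited $\fin\otimes\fin$ computations. You were also right to insist on the restricted definition of $\nono$. The ideal is not tall, and the unrestricted version would give $1$, since a single column meets every member of the ideal in a finite set.

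Two small repairs are needed. First, members of $[\I]^\omega$ are countably infinite families. So your singletons $\{D_h\}$, $\{D_{h_\xi}\}$, and the one-element family consisting of the graph of $f$ must be padded, e.g.\ to $\{D_{h+n}:n\in\omega\}$ or by adjoining finite sets; this changes nothing. Second, in the partial-function lemma the ``suitable reindexing'' only works for nondecreasing $g$. Set $c_A(m)=a_A(\min(I_A\setminus m))$. A nondecreasing $g$ with $g(i)<a_A(i)$ for almost all $i\in I_A$ then satisfies $g(m)\le g(\min(I_A\setminus m))<c_A(m)$ for almost all $m$. Since every function is bounded by a nondecreasing one, failure of every $g$ would make $\{c_A:A\in\cA\}$ a dominating family of size $<\dnumber$, a contradiction.
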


\begin{proof}
   
    Since the proofs of these equalities are rather similar, we present only the proof of the first. The others are obtained by the same method, with families defined analogously.
    
   $(\addo(\{\emptyset\}\otimes\fin)= \bnumber)$ Let $\cA$ be the family that witnesses $\addo(\fin\otimes\fin)= \bnumber$ and consider the family $\cG=\{A\setminus (i_A\times \omega) : A\in\cA \}\cap [\omega\times \omega]^\omega$, where $i_A=\min\{k: A\cap \{n\}\times\omega \text{ is finite for all } n>k\}+1$. Then $|\cG|\leq \bnumber$ and $\cG$ witnesses that $\addo(\{\emptyset\}\otimes\fin)\leq \bnumber$. Too see the other inequality it is enough to notice that the witness for $\addo(\{\emptyset\}\otimes\fin)$ also witnesses $\addo(\fin\otimes\fin)$.
\end{proof}

\begin{lemma}\label{lem:baufa}
$\conv_\alpha\approx \conv_\alpha\oplus\conv_\alpha$.
\end{lemma}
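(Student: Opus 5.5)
We need a bijection between $\omega^\alpha+1$ and two disjoint copies of $\omega^\alpha+1$ carrying $\conv_\alpha$ onto $\conv_\alpha\oplus\conv_\alpha$. The plan is to realize the two copies as subsets of $\omega^\alpha+1$ itself and use the remark that order isomorphic subsets of $\omega^\alpha+1$ give isomorphic $\conv$ ideals.

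\textbf{Step 1: the splitting.} Write $\omega^\alpha+1 = X_0\cup X_1$ with
\[
X_0=[0,\omega^\alpha\cdot 0+\omega^\alpha]\ \text{(i.e.\ take the first copy as $[0,\lambda]$ for a suitable $\lambda$)},
\]
and make this concrete as follows. If $\alpha=\gamma+1$ is a successor, use $\omega^\alpha=\omega^\gamma\cdot\omega$. Take $X_0=[0,\omega^\gamma]\cup\{\omega^\gamma\cdot 2n+\xi:\ldots\}$. This is messy, so use the cleaner route below.

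\textbf{Cleaner route.} Use the characterization $A\in\conv(D)\iff A^d$ is finite, which holds in any sequentially compact $X$ (here $X=\omega^\alpha+1$). Choose a decomposition of $\omega^\alpha+1$ into two disjoint clopen sets $U_0,U_1$, each homeomorphic to $\omega^\alpha+1$. Clopenness is what makes the derived set compute locally: for $A\subseteq \omega^\alpha+1$,
\[
A^d=(A\cap U_0)^d\cup(A\cap U_1)^d, \qquad (A\cap U_j)^d\subseteq U_j .
\]
Hence $A^d$ is finite iff both pieces have finite derived sets. Composing with homeomorphisms $h_j:U_j\to\omega^\alpha+1$, which preserve derived sets, this gives
\[
A\in\conv_\alpha \iff h_j[A\cap U_j]\in\conv_\alpha \text{ for } j=0,1.
\]
The map $\xi\mapsto(h_j(\xi),j)$ for $\xi\in U_j$ is then the required isomorphism.

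\textbf{Step 2: finding $U_0,U_1$.} This is the main point. Let $U_0=[0,\omega^\alpha]$ sitting inside a larger space, so instead decompose the \emph{target}. The space $\omega^\alpha\cdot 2+1$ splits into the clopen sets $[0,\omega^\alpha]$ and $(\omega^\alpha,\omega^\alpha\cdot2]$, both homeomorphic to $\omega^\alpha+1$. It therefore suffices to show that $\omega^\alpha\cdot2+1$ is homeomorphic to $\omega^\alpha+1$, since $\conv$ is defined via convergent sequences and is thus a topological invariant. Both spaces are countable compact with Cantor–Bendixson rank $\alpha+1$. Their top derived levels have sizes $2$ and $1$ respectively, so by Mazurkiewicz–Sierpiński classification they are \emph{not} homeomorphic.

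\textbf{Fix.} The characterization of $\conv$ only needs the derived set to be finite, not the ambient space to be homeomorphic. So work directly: $\conv(\omega^\alpha\cdot 2+1)$ consists of the sets with finite derived set. This ideal is exactly the copy of $\conv_\alpha\oplus\conv_\alpha$ under the clopen splitting above. It remains to show $\conv(\omega^\alpha\cdot2+1)\approx\conv_\alpha$. Define the bijection $\varphi:\omega^\alpha\cdot 2+1\to\omega^\alpha+1$ that sends $[0,\omega^\alpha)$ onto $[0,\omega^\alpha)$ by the identity and the isolated point $\omega^\alpha\cdot 2$ onto $\omega^\alpha$. It must also send $[\omega^\alpha,\omega^\alpha\cdot2)$ somewhere, so instead choose an order-type splitting: $[\omega^\alpha,\omega^\alpha\cdot 2)$ is order isomorphic to $\omega^\alpha$. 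Then use $\omega^\alpha\cdot 2\cong_{\text{order}}\omega^\alpha$ minus finitely many limit points. That fails for the same rank reason, but finitely many points do not matter for $\conv$: the sets $A$ and $A\cup F$ with $F$ finite have the same derived set.

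Concretely, the last point can be handled as follows. A bijection $\psi$ between $D_1=\omega^\alpha\cdot2+1$ and $D_2=\omega^\alpha+1$ that is a homeomorphism off finitely many points, with each exceptional point sent to a point, changes derived sets only by finitely many points. For example, take $\psi$ a homeomorphism from $D_1\setminus\{\omega^\alpha\}$ onto $D_2\setminus\{p\}$ for a suitable isolated $p$, with $\psi(\omega^\alpha)=p$. Both $D_1\setminus\{\omega^\alpha\}$ and $D_2\setminus\{p\}$ are locally compact scattered spaces, so this requires checking that $[0,\omega^\alpha)\sqcup(\omega^\alpha,\omega^\alpha\cdot2]$ is homeomorphic to $\omega^\alpha+1$ minus an isolated point. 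The second summand is $\cong\omega^\alpha+1$, and $[0,\omega^\alpha)$ is homeomorphic to a clopen piece of $\omega^\alpha+1$ missing a point. This last homeomorphism check is the step I expect to be the real obstacle; I would carry it out using the earlier order-isomorphism fact $[\omega^\beta,\omega^\alpha)\cong\omega^\alpha$.
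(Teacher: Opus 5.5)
\textbf{There is a genuine gap.} Several of your reductions are correct: the clopen splitting $[0,\omega^\alpha]\sqcup(\omega^\alpha,\omega^\alpha\cdot 2]$ does identify $\conv(\omega^\alpha\cdot 2+1)$ with $\conv_\alpha\oplus\conv_\alpha$, and a bijection that is a homeomorphism between cofinite subspaces does preserve finiteness of derived sets. But everything then rests on $\conv(\omega^\alpha\cdot2+1)\approx\conv_\alpha$. You never prove this, and the map you propose for it does not exist.

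\emph{The proposed $\psi$ fails for every choice of $p$.}
\begin{itemize}
\item If $p$ is isolated, then $D_2\setminus\{p\}$ is compact. But $D_1\setminus\{\omega^\alpha\}$ is not compact, because $\omega^\alpha$ is a limit point of it.
\item If $p$ is not isolated, then $D_1\setminus\{\omega^\alpha\}$ and $D_2\setminus\{p\}$ are dense open subsets with one-point complements. Their one-point compactifications are therefore $D_1$ and $D_2$. A homeomorphism between them would extend to a homeomorphism $\omega^\alpha\cdot 2+1\cong\omega^\alpha+1$, which you yourself excluded by the Cantor--Bendixson degree.
\end{itemize}
So the step you flagged as the ``real obstacle'' is false, not merely hard. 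No bijection that is a homeomorphism off a single point can work, because removing only one of the two rank-$\alpha$ points of $D_1$ leaves the degree mismatch in place. For the same reason, your earlier ``cleaner route'' of splitting $\omega^\alpha+1$ into two clopen copies of itself is impossible. Also, $\omega^\alpha\cdot2$ is not isolated in $D_1$.

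\emph{What is missing.} You need to discard the top points altogether, and the isomorphism does not have to be a homeomorphism anywhere. The paper passes to $\omega^\alpha$, which is harmless since $\conv_\alpha\approx\conv(\omega^\alpha)$. It then uses the interleaving bijection $\lambda+k\mapsto(\lambda+\lfloor k/2\rfloor,\,k\bmod 2)$ from $\omega^\alpha$ onto $\omega^\alpha\times 2$, where $\lambda$ is a limit or $0$. This map sends some isolated points to limit points. However, it preserves limit parts and maps each block $[\lambda,\lambda+\omega)$ onto $[\lambda,\lambda+\omega)\times 2$. Two facts then finish the argument:
\begin{itemize}
\item $\lambda+\omega\in A^d$ iff $A\cap[\lambda,\lambda+\omega)$ is infinite;
\item a limit of limits $\mu$ lies in $A^d$ iff the limit parts of elements of $A\cap\mu$ are cofinal in $\mu$.
\end{itemize}
Hence $A^d$ is the union of the derived sets of the two halves of the image of $A$.

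Alternatively, you can stay in your own framework. Remove both $\omega^\alpha$ and $\omega^\alpha\cdot2$ from $D_1$, and remove $\omega^\alpha$ together with the isolated point $0$ from $D_2$. What remains to show is $[0,\omega^\alpha)\sqcup[0,\omega^\alpha)\cong[0,\omega^\alpha)$. This is true: both sides are countable sums of compact blocks $\omega^\beta+1$, and $\omega^\beta+1\cong(\omega^\gamma+1)\sqcup(\omega^\beta+1)$ for $\gamma<\beta$ lets you absorb the extra blocks. You would still have to write out that proof.
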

\begin{proof}
Define $f:\omega^\alpha\to \omega^\alpha\times 2$ by 
    \begin{equation*}
f(\xi)=
\begin{cases}
(\lambda+k/2,0) & k \text{ even},\\
(\lambda+(k-1)/2,1) & k \text{ odd},
    \end{cases}
\end{equation*}
where 
 $\lambda$ is 
  a unique limit ordinal and $k$ is a unique natural number  such that $\xi=\lambda+k$. 
Then $f$ is an isomorphism between $\conv(\omega^\alpha)$ and $\conv(\omega^\alpha)\oplus\conv(\omega^\alpha)$ and since $\conv_\alpha$ is isomorphic to $\conv(\omega^\alpha)$ the proof is finished.
\end{proof}

\begin{lemma}\label{lem:bhafdu}
    
    $ \conv(\omega^\alpha\cdot n)\approx\conv_\alpha$, for every $\alpha$ and every $n\in\omega\setminus\{0\}$.
\end{lemma}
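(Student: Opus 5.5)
The plan is to prove $\conv(\omega^\alpha\cdot n)\approx\conv(\omega^\alpha)^{\oplus n}$ and then iterate Lemma~\ref{lem:baufa}. Here $\conv(\omega^\alpha\cdot n)$ is understood as the ideal on the set $\omega^\alpha\cdot n$, viewed as a subspace of the compact space $\omega^\alpha\cdot n+1$. By the characterization of $\conv(D)$ via derived sets, a set $A\subseteq \omega^\alpha\cdot n$ belongs to $\conv(\omega^\alpha\cdot n)$ if and only if $A^d$ is finite. The derived set is computed in $\omega^\alpha\cdot n+1$.

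First, we split $\omega^\alpha\cdot n+1$ into the pieces
\[
I_j=[\omega^\alpha\cdot j,\omega^\alpha\cdot (j+1)), \qquad j<n,
\]
together with the top point $\omega^\alpha\cdot n$.

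Each $I_j$ is order isomorphic to $\omega^\alpha$ via $\xi\mapsto$ the unique $\eta$ with $\omega^\alpha\cdot j+\eta=\xi$. Moreover, $[\omega^\alpha\cdot j,\omega^\alpha\cdot(j+1)]$ is a clopen interval of $\omega^\alpha\cdot n+1$ homeomorphic to $\omega^\alpha+1$. Hence for $A\subseteq I_j$, the accumulation points of $A$ in $\omega^\alpha\cdot n+1$ are exactly the images of the accumulation points of the corresponding subset of $\omega^\alpha$ in $\omega^\alpha+1$. The endpoint $\omega^\alpha\cdot(j+1)$ corresponds to $\omega^\alpha$.

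Therefore, for $A\subseteq\omega^\alpha\cdot n$,
\[
A^d=\bigcup_{j<n}(A\cap I_j)^d ,
\]
since $A$ is a finite union of the sets $A\cap I_j$. It follows that $A^d$ is finite if and only if each $(A\cap I_j)^d$ is finite. This is exactly membership in $\conv(\omega^\alpha)^{\oplus n}$ under the bijection $\omega^\alpha\cdot n\to\omega^\alpha\times n$, $\omega^\alpha\cdot j+\eta\mapsto(\eta,j)$.

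Next, recall that $\conv_\alpha\approx\conv(\omega^\alpha)$. This is used in the proof of Lemma~\ref{lem:baufa}: the extra point $\omega^\alpha$ can be absorbed, for instance by mapping $\omega^\alpha\mapsto 0$ and $\xi\mapsto\xi+1$ for finite $\xi$, which changes sets only finitely. Thus it suffices to show $\conv(\omega^\alpha)^{\oplus n}\approx\conv(\omega^\alpha)$, which we do by induction on $n$. The case $n=1$ is trivial. For $n\ge 2$, the induction hypothesis and associativity of $\oplus$ (up to isomorphism) give
\[
\conv(\omega^\alpha)^{\oplus n}\approx\conv(\omega^\alpha)^{\oplus (n-1)}\oplus\conv(\omega^\alpha)\approx\conv(\omega^\alpha)\oplus\conv(\omega^\alpha).
\]
By the isomorphism constructed in Lemma~\ref{lem:baufa}, the right-hand side is isomorphic to $\conv(\omega^\alpha)$.

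The only delicate point is the topological bookkeeping in the first step. The points $\omega^\alpha\cdot(j+1)$ for $j<n-1$ lie inside $\omega^\alpha\cdot n$ as elements of $I_{j+1}$, while also being limits of sequences from $I_j$. One must check that this does not create or destroy infinitely many accumulation points. It cannot, because at most $n$ such points exist. So finiteness of $A^d$ is unaffected, even if one is careless about whether a given boundary point is counted in $(A\cap I_j)^d$ or in $(A\cap I_{j+1})^d$.
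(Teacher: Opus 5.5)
Your argument is correct and is essentially the paper's: you split $\omega^\alpha\cdot n$ into blocks order isomorphic to $\omega^\alpha$ and absorb them by induction on $n$ using Lemma~\ref{lem:baufa}. The only differences are organisational: the paper peels off one block per step via $\conv(\omega^\alpha\cdot(n+1))\approx\conv(\omega^\alpha\cdot n)\oplus\conv([\omega^\alpha\cdot n,\omega^\alpha\cdot(n+1)))$, whereas you split into all $n$ blocks at once and supply the derived-set justification that the paper leaves implicit.

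One small slip: for $j\ge 1$ the interval $[\omega^\alpha\cdot j,\omega^\alpha\cdot(j+1)]$ is closed but not open, because its left endpoint is a limit of points of $I_{j-1}$. Closedness is all your accumulation-point computation needs, and since $(B\cup C)^d=B^d\cup C^d$ holds exactly, the boundary worry in your last paragraph does not actually arise.
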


\begin{proof}
By induction on $n\in\omega\setminus\{0\}$. Suppose the statement holds for $n$. Notice that $\conv(\omega^\alpha\cdot (n+1))\approx\conv(\omega^\alpha\cdot n)\oplus\conv([\omega^\alpha\cdot n, \omega^\alpha\cdot (n+1)))$, as witnessed by $f:\omega^\alpha\cdot (n+1)\to (\omega^\alpha\cdot (n+1) )\times 2$, 
\begin{equation*}
f(\xi) =
\begin{cases}
(\xi,0) & \xi \in \omega^\alpha\cdot n,\\
(\xi,1) & \xi \in [\omega^\alpha\cdot n, \omega^\alpha\cdot (n+1)).
    \end{cases}
\end{equation*}
Using the induction hypothesis together with the fact that $\conv_\alpha\approx \conv([\omega^\alpha\cdot n, \omega^\alpha\cdot (n+1)))$, it is enough to show that $\conv_\alpha\approx \conv_\alpha\oplus\conv_\alpha$, which is true by Lemma~\ref{lem:baufa}.
\end{proof}

\begin{proposition} 
For every $\alpha$, 
$$\addo(\conv_\alpha)= \bnumber
 \quad \text{and} \quad \cofo(\conv_\alpha)=\nono(\conv_\alpha) = \dnumber.$$
\end{proposition}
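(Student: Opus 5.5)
We prove each of the three equalities by a lower bound and an upper bound. The lower bounds come from the structure of $\conv_\alpha$ near a single limit point. The upper bounds come from reducing to $\{\emptyset\}\otimes\fin$ and $\fin\otimes\fin$ on the sets that witness the failure of membership in $\conv_\alpha$.

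\textbf{Lower bounds for $\nono$ and $\cofo$ via $\fin\otimes\fin$.} By Proposition~\ref{prop:hasldkfhas}(1) and (3), $\conv_2=\fin\otimes\fin\leq_{KB}\conv_\alpha$. Monotonicity of $\nono$ under $\leq_{KB}$ then gives $\dnumber=\nono(\fin\otimes\fin)\leq\nono(\conv_\alpha)$. Figure~\ref{fig:cardinal-characteristics} gives $\nono\leq\cofo$, so $\cofo(\conv_\alpha)\geq\dnumber$ as well.

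\textbf{Lower bound for $\addo$.} We show $\addo(\conv_\alpha)\geq\bnumber$ directly. Take $\cA\subseteq\conv_\alpha$ with $|\cA|<\bnumber$. Each $A\in\cA$ has finite derived set. For each of the countably many finite sets $F$ of limit points of $\omega^\alpha+1$, we look at the members $A\in\cA$ with $A^d\subseteq F$. Such a set is, modulo finite, a union of sequences converging to points of $F$. Near each point of $F$ this is an instance of $\{\emptyset\}\otimes\fin$: coordinatize a neighbourhood of a limit $\lambda$ by a cofinal sequence $\lambda_n\nearrow\lambda$, with blocks $[\lambda_n,\lambda_{n+1})$, and $A$ meets each block in a finite set. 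Fewer than $\bnumber$ functions $n\mapsto\max$ position in block are dominated by a single $g$. Using a fixed enumeration of each countable block, this yields one set $B_F\in\conv_\alpha$, with $B_F^d\subseteq F$, that almost contains every such $A$. The countable family $\{B_F\}$ then witnesses $|\cA|<\addo(\conv_\alpha)$.

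\textbf{Upper bounds.} For $\addo\leq\bnumber$ and $\nono\leq\dnumber$, we embed $\{\emptyset\}\otimes\fin$ into $\conv_\alpha$. Choose infinitely many pairwise disjoint intervals $I_i=[\omega\cdot\omega\cdot i,\omega\cdot\omega\cdot(i+1))\subseteq\omega^2\subseteq\omega^\alpha$, and inside $I_i$ the columns $\{\omega\cdot(\omega i+j)+k:k\in\omega\}$. A set contained in $\bigcup_i I_i$ and meeting each column finitely has derived set inside $\{\omega^2\cdot(i+1)\}\cup\{\omega^2\}$. This is finite only if finitely many $I_i$ are used, which is not quite $\{\emptyset\}\otimes\fin$. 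So it is cleaner to take a single copy, $X=\omega\cdot\omega$, where $\conv(X\cup\{\omega^2\})$ restricted to $X$ is $\fin\otimes\fin$, and to transfer the known witnesses for $\addo(\fin\otimes\fin)=\bnumber$ and $\nono(\fin\otimes\fin)=\dnumber$.

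For $\addo$, a witness $\cA$ for $\fin\otimes\fin$ placed inside $[0,\omega^2)$ stays a witness in $\conv_\alpha$. A countable $\cB\subseteq\conv_\alpha$ almost covering $\cA$ traces on $[0,\omega^2)$ to a countable subfamily of $\fin\otimes\fin$, since $\conv_\alpha$ restricted to $[0,\omega^2]$ is $\conv_2$. For $\nono$, a family $\cA$ of infinite sets witnessing $\nono(\fin\otimes\fin)$ does not work as is, because $\cB$ may use points outside $[0,\omega^2)$. The fix is to use instead that a family of size $\dnumber$ must handle all limit points of $\omega^\alpha$ simultaneously. The plan is to take, for each of the countably many limit ordinals $\lambda\leq\omega^\alpha$, the $\{\emptyset\}\otimes\fin$-witness of size $\dnumber$ from the previous lemma, placed on columns converging to the points below $\lambda$. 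The union $\cA$ then has size $\dnumber$. Given countable $\cB\subseteq\conv_\alpha$, the finitely many derived points of each $B$ give only countably many limit points in total. We pick a limit $\lambda$ not among them and of cofinality $\omega$ with limit points below it accumulating only at $\lambda$, and use the witness at $\lambda$. Finding such $\lambda$, with enough room, is the delicate step. If it fails, we replace it by a diagonal choice of pieces.

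For $\cofo\leq\dnumber$, we fix a dominating family $\{g_\xi:\xi<\dnumber\}$ and the countable set $L$ of limit ordinals $\leq\omega^\alpha$, each with a fixed cofinal sequence and block enumeration. For each $\xi$, let $\mathcal C_\xi$ be the countable family of sets $\bigcup_{\lambda\in F}\{$first $g_\xi(n)$ points of block $n$ at $\lambda\}$, one for each finite $F\subseteq L$. Any $B\in\conv_\alpha$ is, modulo finite, inside such a set for $F=B^d$ and some bounding function. A countable $\cB$ gives countably many functions, all dominated by one $h$, which in turn is dominated by some $g_\xi$. Hence $\mathcal C_\xi$ works.

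The main obstacle is the $\nono\leq\dnumber$ bound: making a single size-$\dnumber$ family defeat every countable $\cB$, even though the sets in $\cB$ may have derived points anywhere in $\omega^\alpha+1$.
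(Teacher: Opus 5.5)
\paragraph{The Kat\v{e}tov inequality is used in the wrong direction.} Your lower bound for $\nono$, and hence for $\cofo$, rests on a reduction that does not exist. Proposition~\ref{prop:hasldkfhas}(1) says the hierarchy is \emph{decreasing}: $\conv_\alpha\leq_K\conv_2$ for every $\alpha$, and $\conv_2\not\leq_K\conv_\alpha$ for $\alpha>2$. So $\fin\otimes\fin\leq_{KB}\conv_\alpha$ is false.

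Used in the correct direction, item (3) and monotonicity of $\nono$ under $\leq_{KB}$ give $\nono(\conv_\alpha)\leq\nono(\conv_2)=\dnumber$. That is the \emph{upper} bound, and it is exactly how the paper proves it. Your own trace argument from the $\addo$ part also gives it. If the witness $\cA$ for $\fin\otimes\fin$ lives in $[0,\omega^2)$, then $A\cap B=A\cap(B\cap\omega^2)$ with $B\cap\omega^2\in\conv_2$, so points of $B$ outside $[0,\omega^2)$ do not matter.

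So the step you call the main obstacle is trivial. The strategy you sketch for it would fail anyway. Since $\omega^\alpha+1$ is countable, it has only countably many limit points, so a countable $\cB\subseteq\conv_\alpha$ can have every one of them as a derived point. Then no ``free'' $\lambda$ exists.

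\paragraph{What is actually missing.} You have no valid argument for $\nono(\conv_\alpha)\geq\dnumber$, and you derived $\cofo(\conv_\alpha)\geq\dnumber$ from it. The $\cofo$ bound is repaired at once by $\dnumber=\cofs(\conv_\alpha)\leq\cofo(\conv_\alpha)$.

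The $\nono$ bound needs real work. The paper uses induction on $\alpha$: it cuts $\omega^\alpha$ into blocks $[\lambda_n,\lambda_{n+1})$. Sets meeting every block finitely are handled by $\nono(\{\emptyset\}\otimes\fin)=\dnumber$. Sets meeting some $\lambda_n$ in an infinite set are handled by the induction hypothesis and Lemma~\ref{lem:bhafdu}.

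A direct argument in the style of your $\addo$ proof also works. Every infinite set contains a sequence increasing to some limit $\lambda$. For each of the countably many $\lambda$, apply $\nono(\{\emptyset\}\otimes\fin)=\dnumber$ to the blocks of a cofinal sequence at $\lambda$. When $\lambda=\mu+\omega$, the single set $(\mu,\lambda)$ already suffices.

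\paragraph{The other parts are sound.} Your $\addo(\conv_\alpha)\leq\bnumber$ is the paper's argument. Your proofs of $\addo(\conv_\alpha)\geq\bnumber$ and $\cofo(\conv_\alpha)\leq\dnumber$ are non-inductive: group sets by their finite derived set $F$, then bound the block functions at each $\lambda\in F$. This is a valid and somewhat simpler alternative to the paper's induction. The one condition is that the block functions at $\lambda$ are read only on blocks lying above the other points of $F$ below $\lambda$.
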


\begin{proof}
    
    ($\addo(\conv_\alpha)\leq \bnumber$)  
    Since $\addo(\conv_2)= \addo(\finxfin)=\bnumber$,
    it is enough to show 
 $\addo(\conv_\alpha)\leq \addo(\conv_2)$ for each $\alpha$. Let $\cA\subset \conv_2$ be a witness for $\addo(\conv_2)$. 
 The proof will be finished, once we show that $\cA$ is also 
 a witness for $\addo(\conv_\alpha)$. 
 Take any $\{B_n : n\in \omega\}\subset \conv_\alpha$. Then $\{B_n\cap(\omega^2+1): n\in \omega\} \subset \conv_2 $, so there is some $A\in \cA$ such that $A\not\subset^* B_n\cap (\omega^2+1) $ for each $n$.  Since $\cA\subset \cP(\omega^2+1)$, then $A\not\subset^* B_n $ for each $n$ as well.

      ($\addo(\conv_\alpha)\geq \bnumber$)  
    We proceed by induction on $\alpha$. Take any $\alpha$ and assume that the inequality  holds for any $\beta < \alpha$.  
    In the case that $\alpha=\beta+1$ for some $\beta$, define $\lambda_n=\omega^\beta\cdot n$. In the case that $\alpha$ is a limit ordinal, take a strictly increasing sequence $(\alpha_n)_{n\in\omega}$ that is cofinal in $\alpha$ with $\alpha_0=0$ and define $\lambda_n=\omega^{\alpha_n}$.
     Take any family $\cA\subset\conv_\alpha$ such that $|\cA|<\bnumber$. For every $F\in\cA$ define a natural number $k_F$ as $$k_F=\min\{k\in\omega : \forall n\geq k \ ( |F\cap[\lambda_n, \lambda_{n+1}]|<\omega)\}. $$ By Proposition~\ref{prop:gwiazdka}, $k_F$ is well-defined. Define also families $$\cA_0=\{F \cap [0 , \lambda_{k_F}) : F\in\cA\} \quad \text{and} \quad  \cA_1=\{F\setminus [0,\lambda_{k_F}) : F\in \cA\}.$$ 
     Consider the ideal $\J=\{C\subset \omega^\alpha : \forall n \ (|C\cap [\lambda_n, \lambda_{n+1}]< \omega )\}.$ Note that $\J$ is isomorphic to the ideal $\{\emptyset\}\otimes\fin$ and $\J\subset \conv_\alpha$. Since $\addo(\{\emptyset\}\otimes\fin)=\bnumber$,  $|\cA_1|<\bnumber$ and $\cA_1\subseteq\cJ$,  we obtain that $|\cA_1|<\addo(\J)$. Therefore, there exists a family $\mathcal{Y}\in [\J]^\omega$ such that for every $A_1\in\cA_1$ there is some $Y\in \mathcal{Y}$ such that $A_1\subset^*Y$. 

     For every $n\in \omega $, notice that $\cG= \{F_0\cap[\lambda_n, \lambda_{n+1}] : F_0\in\mathcal{F}_0\} \subset \conv([\lambda_n, \lambda_{n+1}])$ and  $|\cG|< \bnumber$.
     By induction hypothesis,  $|\cG|<\addo(\conv_\beta)= \addo(\conv([\lambda_n, \lambda_{n+1}]))$ in the successor case and 
     $|\cG|<\addo(\conv_{\alpha_{n+1}})= \addo(\conv([\lambda_n, \lambda_{n+1}]))$ in the limit case. In both cases, we obtain a family  $\mathcal{X}_n\in [\conv([\lambda_n , \lambda_{n+1}])]^\omega$ such that for every $A_0\in\cA_0$ there is  $X\in\mathcal{X}_n$ such that $A_0\cap[\lambda_n , \lambda_{n+1}]\subset^*X$.

      Consider the family $$\cB=\left\{\bigcup s\cup Y : s\in\left[\bigcup_{n\in\omega} \mathcal{X}_n\right]^{<\omega}, Y\in\mathcal{Y}\right\}\in [\conv_\alpha ]^\omega. $$ We claim that it witnesses $|\cA|<\adds_\omega(\conv_{\alpha})$. To see this, take any $F\in\cA$ and the corresponding $k_F$. Then there is some $Y\in\mathcal{Y}$ such that $F\setminus [0,\lambda_{ k_F})\subset^*Y$ and for every $n\leq k_F$  there is some $X_n\in\mathcal{X}_n$ such that $F\cap[\lambda_n, \lambda_{n+1})\subset^*X_n$. Then $F\subset^*\bigcup_{n\leq k_F}X_n\cup Y$.

    ($\cofo(\conv_\alpha)=\dnumber$) 
     For each $\alpha$, $\dnumber=\cofs(\conv_\alpha)\leq\cofo(\conv_\alpha)$. 

     Now we prove that $\dnumber \geq \cofo(\conv_\alpha)$ by induction on $\alpha$. Assume that it holds for $\beta < \alpha$.
     In the case that $\alpha=\beta+1$ for some $\beta$, define $\lambda_n=\omega^\beta\cdot n$. In the case that $\alpha$ is a limit ordinal, take a strictly increasing sequence $(\alpha_n)_{n\in\omega}$ that is cofinal in $\alpha$ with $\alpha_0=0$ and define $\lambda_n=\omega^{\alpha_n}$.

     For each $n\geq1$, using Lemma \ref{lem:bhafdu} together with the induction hypothesis, let $$\mathcal{F}_n=\left\{\left\{F^n_{(\gamma,k)}:k\in\omega\right\} : \gamma\in \dnumber \right\} \subset [\conv(\lambda_n)]^\omega$$ be an enumeration of a witness for $\cofo(\conv(\lambda_n))=\dnumber$. Consider the ideal $\mathcal{J}=\{A\subset \omega^\alpha : \forall n \ (|A\cap [\lambda_n, \lambda_{n+1}]< \omega )\}.$ Note that $\mathcal{J}$ is isomorphic to the ideal $\{\emptyset\}\otimes\fin$ and that $\mathcal{J}\subset \conv_\alpha$.  Let $$\mathcal{K}=\{\{K_{(\delta,j)}:j\in\omega\} : \delta\in \dnumber\} \subset [\J]^\omega$$ be an enumeration of a witness for $\cofo(\J)=\dnumber$.  Define the family $$\cA=\{\{F^n_{(\gamma,k)}\cup K_{(\delta,j)}:j,k,n\in\omega\} : \gamma, \delta\in\dnumber\}.$$ Then $\cA\subset[\conv_\alpha]^\omega$, $|\cA|\leq\dnumber$, and we claim that it witnesses for every family $\{A_i :i\in\omega\}\subset \conv_\alpha$ there is some $\gamma_0$ and $\delta_0$ such that for every $i\in\omega $ there is some $n,j,k\in\omega$ such that $A_i\subset^*F^n_{(\gamma_0,k)}\cup K_{(\delta_0,j)}$.
     To see this, take any  family $\{A_i :i\in\omega\}\subset \conv_\alpha$ and define a function $f:\omega\to\omega$ by $f(i)=\min\{k:\forall n\geq k \ (|A_i\cap [\lambda_n,\lambda_{n+1})|<\omega)\}+1$. For the family $\{A_i\setminus[0, \lambda_{f(i)}) : i\in\omega\}\subset \J$ there is some $\delta_0$ such that for each $i$ there is some $j_i$ such that $A_i\setminus[0, \lambda_{f(i)})\subset^*K_{(\delta_0,j_i)}$. For each $n$,  there is some $\gamma_n$ such that for each $i$ there is some $k_i$ such that $A_i\cap [0, \lambda_n)\subset^*F^n_{(\gamma_n,k_i)}$. Then the family $\{F^n_{(\gamma_n,k_i)}\cup K_{(\delta_0,j_i)}:i,n\in\omega\}\in \cG$ is as required. Indeed, take any $A_i$ and the corresponding $f(i)$. Then $A_i\cap [0,\lambda_{f(i)})\subset^*F^{f(i)}_{(\gamma_{f(i)},k_i)}$ and $A\setminus [0,\lambda_{f(i)})\subset^*K_{(\delta_0,j_i)}$, and therefore $A_i\subset^*F^{f(i)}_{(\gamma_{f(i)},k_i)}\cup K_{(\delta_0,j_i)}$.

     ($\nons_\omega(\conv_\alpha) = \dnumber$) 
     Since $\conv_\alpha \leq_{K}  \conv_2 \approx \Fin\otimes\fin$, we obtain $\nono (\conv_\alpha)\leq \nono(\Fin\otimes\fin)=\dnumber$ for each $\alpha$.
    Now we prove that  $\nono (\conv_\alpha)\geq \dnumber$ by induction on $\alpha$. Take any $\alpha$ and assume that it holds for $\beta < \alpha$.     
    In the case that $\alpha=\beta+1$ for some $\beta$, define $\lambda_n=\omega^\beta\cdot n$. In the case that $\alpha$ is a limit ordinal, take a strictly increasing sequence $(\alpha_n)_{n\in\omega}$ that is cofinal in $\alpha$ with $\alpha_0=0$ and define $\lambda_n=\omega^{\alpha_n}$.
     Take any family $\cA\subset [\omega^\alpha]^\omega$ such that $|\cA|<\dnumber$. Define the family $$\cA_0=\{A\in\cA : \forall n \ (A\cap[\lambda_n, \lambda_{n+1})<\omega)\}.$$
     Consider the ideal $\J=\{C\subset \omega^\alpha : \forall n \ (|C\cap [\lambda_n, \lambda_{n+1}]< \omega )\}.$ Note that $\J$ is isomorphic to the ideal $\{\emptyset\}\otimes\fin$ and $\cJ\subset \conv_\alpha$. Then $\cA_0\subset \J\cap[\omega^\alpha]^\omega$, and since $|\cA_0|<\dnumber$ and $\nono(\J)=\dnumber$,  there exists $\cB_0\in [\J]^\omega$ such that for every $A\in\mathcal{A}_0$ there is some $B\in\cB_0$ such that the set $B\cap A$ is infinite.

      For every $n\geq1$, consider the family $$\cA_n= \{A\cap\lambda_n : A\in\mathcal{A}\text{ and } |A\cap\lambda_n|=\omega\}.$$ Then $\mathcal{A}_n\subset[\lambda_n]^\omega$ and since $|\mathcal{A}_n|<\dnumber$, by the induction hypothesis and the Lemma \ref{lem:bhafdu}, $|\cA_n|<\nono(\conv(\lambda_n))$ so there exists $\mathcal{B}_n\in[\conv(\lambda_n)]^\omega$ such that for every $A\in\mathcal{F}_n$ there is some $B\in\mathcal{B}_n$ such that the set $A\cap B\cap\lambda_n$ is infinite. 
      Let $\mathcal{B}=\bigcup_{n\in\omega}\mathcal{B}_n $. Then $\cB\in [\conv_\alpha]^\omega$ and we claim that it witnesses $|\mathcal{A}|<\nono(\conv_{\alpha})$. Take any $A\in\mathcal{A}$. Since it is infinite, either there is some $n\geq 1$ such that the set $A\cap\lambda_n$ is infinite, then the set $A\cap B$ is infinite for some $B\in\mathcal{B}_n$, or $A\in\cA_0$ in which case one can find a corresponding $B\in\cB_0$ with $
      B\cap A$ being infinite.
\end{proof}

Up to this point, all cardinal invariants of $\conv_\alpha$ have taken the same values as for $\fin\otimes\fin$. We now turn to the first invariant where they differ, namely $\covp(\I)$. This difference is unavoidable, since this invariant is closely related to the property $P^-(\omega)$, as shown in \cite[Theorem~6.5 and Proposition~2.5]{MR4584767}. From this it follows that its value for $\conv_\alpha$ for $\alpha\geq3$ must be uncountable, whereas for $\fin\otimes\fin$ it is countable, which we will not prove, since we determine the exact value below.

\begin{proposition}\label{prop:covp}
      For every $\alpha\geq3$,
    $$\covp(\conv_\alpha)=\bnumber.$$

\end{proposition}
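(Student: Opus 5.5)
The inequality $\covp(\conv_\alpha)\leq\bnumber$ is immediate. By Figure~\ref{fig:cardinal-characteristics} we have $\covp(\I)\leq\covs(\I)$ for every tall ideal, and $\covs(\conv_\alpha)=\bnumber$ by the proposition quoted from \cite{critical-ideals-RF-MK-AK}. So the work is in the lower bound $\covp(\conv_\alpha)\geq\bnumber$.

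First I reduce the lower bound to the case $\alpha=3$. By Proposition~\ref{prop:hasldkfhas}(1), $\conv_\alpha\leq_K\conv_3$ for every $\alpha\geq 3$. Since $\I\leq_K\J$ implies $\covp(\I)\geq\covp(\J)$, this gives $\covp(\conv_\alpha)\geq\covp(\conv_3)$. Moreover $\conv_3\approx\BI$, so it suffices to prove $\covp(\BI)\geq\bnumber$. Concretely, I will show the following: for every family $\cA\subseteq\BI$ with $|\cA|<\bnumber$ there is $B\in\BI^+$ such that $A\cap B$ is finite for all $A\in\cA$.

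Fix such a family $\cA$ and write $A_i=\{(j,k):(i,j,k)\in A\}$. For each $A\in\cA$ choose $i_A$ as in the definition of $\BI$.
\begin{itemize}
\item For $i\geq i_A$ the section $A_i$ is finite. Let $\psi_A(i)$ be larger than every $j$ with $(j,k)\in A_i$ for some $k$, and put $\psi_A(i)=0$ for $i<i_A$.
\item For $i<i_A$ we have $A_i\in\Fin^2$. Choose $\varphi_{A,i}\in\omega^\omega$ and $m_{A,i}\in\omega$ such that $A_i\cap(\{j\}\times\omega)\subseteq\{j\}\times\varphi_{A,i}(j)$ for all $j\geq m_{A,i}$. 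For $i\geq i_A$ put $\varphi_{A,i}\equiv 0$.
\end{itemize}
Each of the families $\{\psi_A:A\in\cA\}$ and $\{\varphi_{A,i}:A\in\cA\}$ (for fixed $i$) has size $<\bnumber$. Hence we can pick a single $h\in\omega^\omega$ with $\psi_A\leq^* h$ for all $A$, and for each $i$ a function $g_i\in\omega^\omega$ with $\varphi_{A,i}\leq^* g_i$ for all $A$.

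Now define
$$B=\{(i,j,g_i(j)):\ i\in\omega,\ j\geq h(i)\}.$$
Every section $B_i$ is infinite, so there is no $i_0$ beyond which all sections of $B$ are finite, and therefore $B\notin\BI$.

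It remains to check that $A\cap B$ is finite for each $A\in\cA$. I split the indices $i$ into three ranges.
\begin{itemize}
\item For $i<i_A$: the point $(j,g_i(j))$ lies in $A_i$ only if $j<m_{A,i}$, or $g_i(j)<\varphi_{A,i}(j)$. Both happen for only finitely many $j$, so $A_i\cap B_i$ is finite. There are only finitely many such $i$.
\item For $i\geq i_A$ with $h(i)>\psi_A(i)$: every point of $B_i$ has first coordinate $j\geq h(i)>\psi_A(i)$, so $A_i\cap B_i=\emptyset$. Since $\psi_A\leq^* h$, this covers all but finitely many $i$.
\item For the finitely many remaining $i$: $A_i$ is finite, so $A_i\cap B_i$ is finite.
\end{itemize}
Altogether $A\cap B$ is finite, which proves $\covp(\BI)\geq\bnumber$.

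The only delicate point is choosing the shape of $B$. The naive choice of whole vertical lines in each section does not work, because a $\Fin^2$ section $A_i$ with $i<i_A$ may contain finitely many full columns. Using graphs of functions dominating the $\varphi_{A,i}$, as above, avoids this. I expect no further obstacle: since $\bnumber$ has uncountable cofinality, the countably many dominations needed (one $g_i$ for each $i$, plus $h$) are all available simultaneously.
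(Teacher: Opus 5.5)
Your proof is correct and is essentially the paper's argument: the upper bound comes from $\covp\le\covs=\bnumber$, and the lower bound reduces to $\alpha=3$ via $\conv_\alpha\le_K\conv_3$ and Kat\v{e}tov-monotonicity of $\covp$. For $\conv_3\approx\BI$ you then use two rounds of $\bnumber$-domination to build a positive set that takes one point per column above a dominating height and discards a dominated initial segment of columns in each block. The paper differs only in bookkeeping: it uses a single height function for all blocks and defines the cut-off function from $A\cap F$ after the first round, whereas you use per-block $g_i$ and read the cut-off $h$ directly off the finite blocks.

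One small slip: $\psi_A\le^* h$ gives $h(i)\ge\psi_A(i)$ for almost all $i$, not $h(i)>\psi_A(i)$. That still suffices, since every $j$ occurring in $A_i$ satisfies $j<\psi_A(i)\le h(i)$.
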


\begin{proof}
First we show that $\covp(\conv_3)=\bnumber.$
    One inequality follows from the fact that $\covp(\conv_3)\leq\covs(\conv_3)=\bnumber$. To see the reversed inequality, take any family $\cA\subset \conv_3$ such that $|\cA|<\bnumber$. For every $j\in\omega$ and $F\in\cA$ consider the function $f_{F,j}:\omega\to\omega $ defined by 
    $$f_{F,j}(i)=\max\{k\in\omega: \omega^2\cdot j+\omega \cdot i+k\in F\} $$ 
    with the convention that maximum of an infinite and empty set is $0$. 
    As $|\{f_{F,j} : F\in\cA , j\in\omega\}|\leq |\cA|\cdot \omega<\bnumber$, there is some function $f:\omega\to\omega$ such that $f_{F,j}\leq^*f$ for each $F\in\cA$ and $j\in\omega$. Define 
    $$A=\{\omega^2\cdot j+\omega\cdot i+(f(i)+1): i,j\in\omega\}.$$
    Since $\cF\subset\conv_3$, for every $F\in\cA$ and every $j\in\omega$ there is some $i(F,j)\in\omega$ such that the set $F\cap [\omega^2\cdot j +\omega\cdot i, \omega^2\cdot j+\omega\cdot (j+1)]$ is finite for each $i\geq i(F,j)$. Therefore the set $A\cap F\cap [\omega^2\cdot j, \omega^2\cdot (j+1)]$ is finite for every $F\in\cA$ and $j\in\omega$.
    For every $F\in\cA$ define $g_F:\omega\to\omega$ by 
    $$g_F(j)=\min\{k : \forall i\geq k \ (A\cap F\cap [\omega^2\cdot j+\omega \cdot i, \omega^2 \cdot j +\omega\cdot (i+1))=\emptyset)\}.$$
    
    Again,  $|\{g_{F} : F\in\cA \}|\leq |\cA|<\bnumber$, so there is some function $g:\omega\to\omega$ such  that $g_F\leq^*g$ for each $F\in\cA$. Define $$B=A\setminus \bigcup_{j\in\omega}[\omega^2\cdot j , \omega^2\cdot j +\omega\cdot g(j)).$$ Then $B\not\in \conv_3$ and we claim that $B$ witnesses $|\cA|<\covp(\conv_3)$. To see this take any $F\in\cA$ and notice that since  there is some $j_F$ such that $g_F(j)\leq g(j)$ for every $j\geq j_F$, we obtain that $B\cap F\cap [\omega^2\cdot j +\omega\cdot g(j) , \omega^2\cdot (j+1)] =\emptyset$ for all $j\geq j_F$. And since for every $j<j_F$, we know that $B\cap F\cap [\omega^2\cdot j, \omega^2\cdot (j+1))$ is finite, the set $B\cap F$ is finite as well.

    We now address the general case. Assume $\alpha\geq 3$. Using the properties of these cardinal invariants, we know that  $\covp(\conv_\alpha)\leq\covs(\conv_\alpha)=\bnumber$ and $\covp(\conv_\alpha)\geq\covp(\conv_3)=\bnumber$ since  $\conv_\alpha\leq_K\conv_3$.
\end{proof}


\bibliographystyle{amsplain}
\bibliography{references}

\end{document}